\newcommand{\bea}{\begin{eqnarray}}
\newcommand{\eea}{\end{eqnarray}}
\newcommand{\be}{\begin{equation}}
\newcommand{\ee}{\end{equation}}
\newcommand{\beann}{\begin{eqnarray*}}
\newcommand{\eeann}{\end{eqnarray*}}
\newcommand{\balnn}{\begin{align*}}
\newcommand{\ealnn}{\end{align*}}
\newcommand{\nn}{\nonumber}
\newcommand{\R}{{\mathbb R}}
\newcommand{\ba}{\begin{array}}
\newcommand{\ea}{\end{array}}
\newcommand{\bd}{\begin{displaymath}}
\newcommand{\ed}{\end{displaymath}}
\def\P{\mathbb P}
\newcommand{\1}{{\mathbf 1}}
\def\E{{\mathbb E}}
\newcommand{\F}{{\mathcal F}}
\newcommand{\BB}{{\mathcal B}}
\DeclareMathOperator{\Cov}{Cov}
\DeclareMathOperator{\e}{e}
\newtheorem{thm}{Theorem}
\newtheorem{lemma}{Lemma}
\newtheorem{definition}{Definition}
\title{A $J$-function for inhomogeneous spatio-temporal point processes}
\author{O.\ Cronie\footnote{Corresponding author: ottmar@cwi.nl} \, and  
M.N.M.\ van Lieshout}
\date{}
\begin{document}
\newcommand{\edt}[1]{{\vbox{ \hbox{#1} \vskip-0.3em \hrule}}}

\maketitle \noindent 
\begin{center} 
{\em CWI } \\ 
P.O. Box 94079, 
1090 GB Amsterdam, 
The Netherlands
\end{center} 

\bigskip

\noindent{\bf Abstract}: 
We propose a new summary statistic for inhomogeneous intensity-reweighted 
moment stationary spatio-temporal point processes. The statistic is defined 
through the $n$-point correlation functions of the point process and it 
generalises the $J$-function when stationarity is assumed. We show that our 
statistic can be represented in terms of the generating functional and that 
it is related to the inhomogeneous $K$-function. We further discuss its 
explicit form under some specific model assumptions and derive a 
ratio-unbiased estimator. We finally illustrate the use of our statistic 
on simulated data.

\bigskip

\noindent {\bf Key words}: 
Generating functional,
Hard core model,
Inhomogeneity, 
Intensity-reweighted moment stationarity, 
$J$-function, 
$K$-function, 
Location-dependent thinning, 
Log-Gaussian Cox process,
$n$-point correlation function, 
Papangelou conditional intensity, 
Poisson process,
Reduced Palm measure generating functional, 
Second order intensity-reweighted stationarity, 
Spatio-temporal point process. 

\section{Introduction}

A spatio-temporal point pattern can be described as a collection of pairs
$\{(x_i,t_i)\}_{i=1}^{m}$, $m\geq0$, where $x_i\in W_S\subseteq\R^d$, $d\geq1$, 
and $t_i\in W_T\subseteq\R$ describe, respectively, the spatial location and 
the occurrence time associated with the $i$th event. Examples of such point 
patterns include recordings of earthquakes, disease outbreaks and fires
(see e.g.\ \cite{GabrielDiggle,MollerDiaz,Ogata}). 

When modelling spatio-temporal point patterns, the usual and natural approach 
is to assume that $\{(x_i,t_i)\}_{i=1}^{m}$ constitutes a realisation of a 
spatio-temporal point process (STPP) $Y$ restricted to $W_S\times W_T$. 
Then, in order to deduce what type of model could describe the observations
$\{(x_i,t_i)\}_{i=1}^{m}$, one carries out an exploratory analysis of the data  
under some minimal set of conditions on the underlying point process $Y$. 
At this stage, one is often interested in detecting tendencies for points
to cluster together, or to inhibit one another. In order to do so, one usually
employs spatial or temporal summary statistics, which are able to capture and 
reflect such features. 

A simple and convenient working assumption for the underlying point process 
is stationarity. In the case of a purely spatial point pattern 
$\{x_i\}_{i=1}^{m}\subseteq W_S$ generated by a stationary spatial point 
process $X$, a variety of summary statistics have been developed, see e.g.\
\cite{SKM,Handbook,Illian,MCbook}. One such statistic is the so-called 
$J$-function \cite{MCBaddeley}, given by 
\begin{equation}
\label{e:J}
J(r)= \frac{1-G(r)}{1-F(r)}
\end{equation}
for $r\geq0$ such that $F(r)\neq1$. Here, the empty space function $F(r)$ 
is the probability of having at least one point of $X$ within distance $r$ 
from the origin whereas the nearest neighbour distance distribution function 
$G(r)$ is the conditional probability of some further point of $X$ falling
within distance $r$ from a typical point of $X$. Hence, $J(r)<1$ indicates clustering,
$J(r)=1$ indicates spatial randomness and $J(r)>1$ indicates regularity at 
inter-point distance $r$. 

In many applications, though, stationarity is not a reasonable assumption. 
This observation has led to the development of summary statistics being able to 
compensate for inhomogeneity. For purely spatial point processes,
\cite{BaddeleyEtAl} introduced the notion of second order intensity-reweighted 
stationarity (SIRS) and defined a summary statistic $K_{\rm inhom}(r)$. It can be interpreted as an analogue of the $K$-function, which is proportional to the expected number of further points within distance $r$ of a typical point of $X$, since it reduces to $K(r)$ when $X$ is stationary.

The concept of SIRS was extended to the spatio-temporal case by 
\cite{GabrielDiggle} who also defined an inhomogeneous spatio-temporal 
$K$-function $K_{\rm inhom}(r,t)$, $r,t\geq0$. These ideas were further 
developed and studied in \cite{MollerGhorbani} with particular attention
to the notion of space-time separability.

To take into account interactions of order higher than two, \cite{MCJfun}
introduced the concept of intensity-reweighted moment stationarity (IRMS) 
for purely spatial point processes and generalised (\ref{e:J}) to 
IRMS point processes. 

In this paper we develop a proposal given in \cite{MCJfun} to study the 
spatio-temporal generalisation $J_{\rm inhom}(r,t)$ of (\ref{e:J}) under
suitable intensity-reweighting. In Section~\ref{SectionPreliminaries} we 
give the required preliminaries, which include definitions of product densities, 
Palm measures, generating functionals, $n$-point correlation functions and 
IRMS for spatio-temporal point processes. Then, in Section~\ref{SectionJfun}, 
we give the definition of $J_{\rm inhom}(r,t)$ under the assumption of IRMS and 
discuss its relation to the inhomogeneous spatio-temporal $K$-function of 
\cite{GabrielDiggle}. In Section~\ref{SectionRepresentation} we write 
$J_{\rm inhom}(r,t)$ as a ratio of $1-F_{\rm inhom}(r,t)$ and $1-G_{\rm inhom}(r,t)$ 
in analogy with (\ref{e:J}). As a by-product we obtain generalisations of 
the empty space function and the nearest neighbour distance distribution. The 
section also includes a representation in terms of the Papangelou conditional 
intensity. In Section~\ref{SectionExamples} we consider three classes of 
spatio-temporal point processes for which the IRMS assumption holds, namely 
Poisson processes, location dependent thinning of stationary STPPs and 
log-Gaussian Cox processes. In Section~\ref{SectionEstimation} we derive a 
non-parametric estimator $\widehat{J_{\rm inhom}}(r,t)$ for which we show 
ratio-unbiasedness and in Section~\ref{SectionData} we illustrate its use on 
simulated data.

\section{Definitions and preliminaries}
\label{SectionPreliminaries}

\subsection{Simple spatio-temporal point process}

In order to set the stage, let $\|x\|= (\sum_{i=1}^{d}x_i^2 )^{1/2}$ and 
$d_{\R^d}(x,y)=\|x-y\|$,  $x,y\in\R^d$ denote respectively the Euclidean norm 
and metric. Since space and time must be treated differently, we endow $\R^{d}\times\R$ with the supremum norm 
$\|(x,t)\|_{\infty} = \max\{\|x\|,|t|\}$ and the supremum metric 
\[
d((x,t),(y,s)) = \|(x,t)-(y,s)\|_{\infty} =\max\{d_{\R^d}(x,y),d_{\R}(t,s)\},
\] 
where $(x,t),(y,s)\in\R^{d}\times\R$. Then $(\R^{d}\times\R,  d(\cdot,\cdot))$ 
is a complete separable metric space, which is topologically equivalent to 
the Euclidean space $(\R^{d}\times\R, d_{\R^{d+1}}(\cdot,\cdot))$. Note that 
in the supremum metric a closed ball of radius $r\geq0$ centred at the 
origin $0\in\R^{d}\times\R$ is given by the cylinder set 
\[
B[0,r] = \{ (x,t)\in\R^{d}\times\R: \max\{\|x\|,|t|\}\leq r\}. 
\]

Write $\BB(\R^{d}\times\R) = \BB(\R^{d})\otimes\BB(\R)$ for the $d$-induced 
Borel $\sigma$-algebra and let $\ell$ denote Lebesgue measure on 
$\R^{d}\times\R$. Furthermore, given some Borel set $A\subseteq\R^{d}\times\R$ 
and some measurable function $f$, we interchangeably let $\int_{A}f(y)dy$ 
and $\int_{A}f(y)\ell(dy)$ represent the integral of $f$ over $A$ with 
respect to $\ell$. 

In this paper, a spatio-temporal point process is a simple point process
on the product space $\R^d\times\R$. More formally, let
$N$ be the collection of all locally finite counting measures 
$\varphi$ on $\BB(\R^{d}\times\R)$, i.e.\ $\varphi(A)<\infty$ for 
bounded $A\in\BB(\R^{d}\times\R)$, and let $\mathcal{N}$ be the smallest 
$\sigma$-algebra on $N$ to make the mappings $\varphi\mapsto\varphi(A)$ 
measurable for all $A\in\BB(\R^{d}\times\R)$. Consider in addition the 
sub-collection 
$N^*=\{\varphi\in N: 
  \varphi(\{(x,t)\})\in\{0,1\}\text{ for any }(x,t)\in\R^{d}\times\R\}$ 
of simple elements of $N$.

\begin{definition}
A simple spatio-temporal point process (STPP) $Y$ on $\R^{d}\times\R$ is 
a measurable mapping from some probability space $(\Omega,\F,\P)$ into 
the measurable space $(N,\mathcal{N})$ such that $Y$ almost surely (a.s.)
takes values in $N^*$. 
\end{definition}

Throughout we will denote the $Y$-induced probability measure on 
$\mathcal{N}$ by $P$. To emphasise the counting measure nature of $Y$,
we will sometimes write $Y=\sum_{i=1}^{\infty}\delta_{(X_i,T_i)}$ as a sum of
Dirac measures, where the $X_i \in \R^d$ are the spatial components and the  
$T_i \in \R$ are the temporal components of the points of $Y$. Hence, both 
$Y(\{(x,t)\})=1$ and $(x,t)\in Y$ will have the same meaning and both 
$Y(A)$ and $|Y\cap A|$ may be used as notation for the the number of 
points of $Y$ in some set $A$, where $|\cdot|$ denotes cardinality.

\subsection{Product densities and $n$-point correlation functions}
\label{SectionIRMS}

Our definition of the inhomogeneous $J$-function relies on the so-called
$n$-point correlation functions, which are closely related to the better 
known product densities. Here we recall their definition.

Suppose that the factorial moment measures of $Y$ exist as locally finite
measures and that they are absolutely continuous with respect to the $n$-fold product
of $\ell$ with itself. The Radon--Nikodym derivatives $\rho^{(n)}$, $n\geq1$, referred
to as \emph{product densities}, are permutation invariant and defined by the 
integral equations 
\begin{align}
\label{ExpProdDens}
&\E\left[
\sum_{(x_1,t_1),\ldots,(x_n,t_n)\in Y}^{\neq}
h((x_1,t_1),\ldots,(x_n,t_n))
\right]
=
\\
&=
\int \cdots \int 
h((x_1,t_1),\ldots,(x_n,t_n))
\rho^{(n)}((x_1,t_1),\ldots,(x_n,t_n)) dx_1 dt_1 \cdots dx_n dt_n
\nn
\end{align} 
for non-negative measurable functions $h: (\R^d\times \R)^n \to \R$
under the proviso that the left hand side is infinite if and only if 
the right hand side is. 
Equation (\ref{ExpProdDens}) is sometimes referred to as the {\em Campbell theorem}. 
The heuristic interpretation of 
$\rho^{(n)}((x_1,t_1),\ldots,(x_n,t_n)) dx_1 dt_1 \cdots dx_n dt_n$ is 
that it represents the infinitesimal probability of observing the points 
$x_1,\ldots,x_n\in\R^{d}$ of $Y$ at the respective event times 
$t_1,\ldots,t_n\in\R$.

For $n=1$, we obtain the \emph{intensity measure} $\Lambda$ of $Y$ as
\[
\Lambda(A)
=
\int_{B\times C}
\rho^{(1)}(x,t)dx dt
\]
for any $A = B\times C\in\BB(\R^{d}\times\R)$. 
We shall also use the common notation $\lambda(x,t) = \rho^{(1)}(x,t)$
and assume henceforth that $\bar{\lambda} = \inf_{(x,t)}\lambda(x,t)>0$. 

The  \emph{$n$-point correlation functions} \cite{White} are defined 
in terms of the $\rho^{(n)}$ by setting $\xi_1 \equiv1$ and recursively
defining 
\begin{align}
\label{CorrFunc}
\frac{
\rho^{(n)}((x_1,t_1),\ldots,(x_n,t_n))
}
{\prod_{k=1}^{n}\lambda(x_k,t_k)}
&=
\sum_{k=1}^{n}
\sum_{D_1,\ldots,D_k}
\prod_{j=1}^{k}
\xi_{|D_j|}(\{(x_i,t_i):i\in D_j\}),
\end{align}
where 
$\sum_{D_1,\ldots,D_k}$ is a sum over all possible $k$-sized partitions 
$\{D_1,\ldots,D_k\}$, $D_j\neq \emptyset$, of the set $\{1,\ldots,n\}$ 
and $|D_j|$ denotes the cardinality of $D_j$. 

For a Poisson process on $\R^{d}\times\R$ with intensity function $\lambda(x,t)$,
due to e.g.\ \cite[Theorem 1.3]{MCbook},  we have that 
$\rho^{(n)}((x_1,t_1),\ldots,(x_n,t_n))=\prod_{k=1}^{n}\lambda(x_k,t_k)$, 
whereby $\xi_n\equiv0$ for all $n\geq 2$. Hence, the sum on the right hand 
side in expression (\ref{CorrFunc}) is a finite series expansion of the 
dependence correction factor by which we multiply the product density 
$\prod_{k=1}^{n}\lambda(x_k,t_k)$ of the Poisson process to obtain the 
product density $\rho^{(n)}((x_1,t_1),\ldots,(x_n,t_n))$ of $Y$. 

A further interpretation is obtained by realising that the right hand side 
of the above expression is a series expansion of a higher order version 
of the \emph{pair correlation function}
$$
g((x_1,t_1),(x_2,t_2))=
\frac{\rho^{(2)}((x_1,t_1),(x_2,t_2))}{\lambda(x_1,t_1)\lambda(x_2,t_2)} = 
1 + \xi_2((x_1,t_1),(x_2,t_2)).
$$
 
The main definition of this section gives the class of STPPs to which we
shall restrict ourselves in the sequel of this paper.

\begin{definition}
\label{d:IRMS}
Let $Y$ be a spatio-temporal point process for which product densities 
of all orders exist.
If $\bar{\lambda} = \inf_{(x,t)}\lambda(x,t)>0$ and for all $n\geq1$, 
$\xi_n$ is translation invariant in the sense that 
$$
\xi_n((x_1,t_1)+(a,b),\ldots,(x_n,t_n)+(a,b)) = \xi_n((x_1,t_1),\ldots,(x_n,t_n)) 
$$
for almost all $(x_1,t_1),\ldots,(x_n,t_n)\in\R^{d}\times\R$ and all 
$(a,b)\in\R^{d}\times\R$, we say that $Y$ is intensity-reweighted moment
stationary (IRMS).
\end{definition}

By equation (\ref{CorrFunc}), translation invariance of all $\xi_n$ is 
equivalent to translation invariance of the intensity-reweighted product 
densities. The property is weaker than stationarity (barring the degenerate
case where $Y$ is a.s.\ empty), which requires the
distribution of $Y$ to be invariant under translation, but stronger than
the \emph{second order intensity-reweighted stationary} (SIRS) of 
\cite{BaddeleyEtAl}. The latter property, in addition to satisfying 
$\bar{\lambda}>0$, requires the random measure 
\[
\Xi=\sum_{(x,t)\in Y}\frac{\delta_{(x,t)}}{\lambda(x,t)}
\] 
to be second order stationary \cite[p.\ 236]{DVJ2}. 

\subsection{Palm measures and conditional intensities}

In order to define a nearest neighbour distance distribution function, we 
need the concept of \emph{reduced Palm measures}. Recall that by assumption
the intensity measure is locally finite. In integral terms, they 
can be defined by the \emph{reduced Campbell-Mecke} formula 
\bea
\label{CMthm}
\E\left[\sum_{(x,t)\in Y}g(x,t,Y\setminus\{(x,t)\})\right]
&=&
\int_{\R^d\times\R}\int_{N}
g(x,t,\varphi)
P^{!(x,t)}(d\varphi)
\lambda(x,t)dx dt
\nn
\\
&=&
\int_{\R^d\times\R}
\E^{!(x,t)}\left[g(x,t,Y)\right]
\lambda(x,t)dx dt
\eea
for any non-negative measurable function $g: \R^d \times R \times N$, 
with the left hand side being infinite if and only if the right hand side 
is infinite, see e.g.\ \cite[Chapter~1.8]{MCbook}. By standard measure 
theoretic arguments \cite{Halmos}, it is possible to find a regular version
such that $P^{!(x,t)}(R)$ is measurable as a function of $(x,t)$ and a probability
measure as a function of $R$. Thus, $P^{!(x,t)}(R)$ may be interpreted as the 
conditional probability of $Y\setminus\{(x,t)\}$ falling in $R\in\mathcal{N}$
given $Y(\{(x,t)\})>0$.

At times we make the further assumption that $Y$ admits a \emph{Papangelou
conditional intensity} $\lambda(\cdot,\cdot;\varphi)$. In effect, we may
then replace expectations under the reduced Palm distribution by 
expectations under $P$. More precisely, (\ref{CMthm}) may be rewritten as 
\begin{align}
\label{Papangelou}
& \E\left[\sum_{(x,t)\in Y}g(x,t,Y\setminus\{(x,t)\})\right]
=
\int_{\R^d\times\R}\E\left[g(x,t,Y) \lambda(x,t;Y)\right] dx dt
\end{align}
for any non-negative measurable function $g\geq0$ on $\R^d\times\R\times N$. 
Equation (\ref{Papangelou}) is referred to as the \emph{Georgii-Nguyen-Zessin} 
formula. We interpret $\lambda(x,t;Y)dx dt$ as the conditional probability of 
finding a space-time point of $Y$ in the infinitesimal region 
$d(x,t)\subseteq\R^d\times\R$, given that the configuration elsewhere coincides
with $Y$. For further details, see e.g.\ \cite[Chapter~1.8]{MCbook}.

\subsection{The generating functional}

For the representation of $J_{\rm inhom}$  in the form (\ref{e:J}), we will need 
the \emph{generating functional} $G(\cdot)$ of $Y$, which is defined as 
\[
G(v) = \E\left[\prod_{(x,t)\in Y}v(x,t)\right]
= \int_{N} \prod_{(x,t)\in\varphi}v(x,t)P(d\varphi)
\]
for all functions $v=1-u$ such that $u:\R^{d}\times\R\rightarrow[0,1]$ is 
measurable with bounded support on $\R^{d}\times\R$. By convention, 
an empty product equals $1$. The generating functional 
uniquely determines the distribution of $Y$ \cite[Theorem 9.4.V.]{DVJ2}. 

Since we assume that the product densities of all orders exist, 
we have that
\begin{align}
\label{SeriesGF}
&G(v) 
=
G(1-u)=
\\
&=
1 + \sum_{n=1}^{\infty} \frac{(-1)^n}{n!} 
\int \cdots \int 
u(x_1,t_1)\cdots u(x_n,t_n)
\rho^{(n)}((x_1,t_1),\ldots,(x_n,t_n)) 
\prod_{i=1}^{n}dx_i dt_i, \nn
\end{align}
provided that the right hand side converges (see \cite[p.\ 126]{SKM}). 
The generating functional of the reduced Palm distribution $P^{!y}$
is denoted by $G^{!y}(v)$.

\section{Spatio-temporal $J$-functions}
\label{SectionJfun}

We now turn to the definition of the inhomogeneous $J$-function 
$J_{\rm inhom}(r,t)$. Before giving the definition in our general context, 
we define a spatio-temporal $J$-function $J(r,t)$ for stationary STPPs. 

\subsection{The stationary $J$-function}

Assume for the moment that $Y$ is stationary. Then we may set,
in complete analogy to the definition in \cite{MCBaddeley},
\bea
\label{StatJfun}
J(r,t) 
= \frac{1-G(r,t)}{1-F(r,t)}
= \frac{\P^{!(0,0)}(Y\cap S_r^t=\emptyset)}{\P(Y\cap S_r^t=\emptyset)}
\eea
for $r,t\geq0$ such that $F(r,t)\neq1$, 
where 
\[
S_r^t=\{(x,s)\in\R^d\times\R:\|x\|\leq r, |s|\leq t\}
\]
and $\P^{!(0,0)}$ is the $P^{!(0,0)}$-reversely induced probability measure on 
$\F$. 

Note that the two equalities in (\ref{StatJfun}) are defining ones and, 
clearly, $G(r,t)$ is the spatio-temporal nearest neighbour distance 
distribution function whereas $F(r,t)$ is the spatio-temporal empty space 
function. 

\subsection{The inhomogeneous $J$-function}
\label{S:defJ}

In this section, we extend the inhomogeneous $J$-function in \cite{MCJfun} 
to the product space $\R^{d}\times\R$ equipped with the supremum metric
$ d(\cdot,\cdot)$.

\begin{definition}
\label{Jfun}
Let $Y$ be an IRMS spatio-temporal point process (cf.~Definition~\ref{d:IRMS}). 
For $r,t\geq0$, let
\beann
J_n(r,t) 
= \int_{S_r^t}\cdots\int_{S_r^t} 
\xi_{n+1}((0,0),(x_1,t_1),\ldots,(x_n,t_n)) 
\prod_{i=1}^{n}dx_i dt_i
\eeann
and set
\begin{align}
\label{JfunSTPP}
J_{\rm inhom}(r,t) = 1 + \sum_{n=1}^{\infty} \frac{(-\bar{\lambda})^n}{n!} J_n(r,t) 
\end{align}
for all spatial ranges $r\geq 0$ and temporal ranges $t\geq 0$ for which 
the series is absolutely convergent. 
\end{definition}

Note that by Cauchy's root test absolute convergence holds for 
those $r,t\geq0$ for which 
$\limsup_{n\rightarrow\infty} \left(
\frac{\bar{\lambda}^n}{n!} |J_n(r,t)|
\right)^{1/n} < 1$.

Let us briefly mention a few special cases. For a Poisson process,
since $\xi_{n+1} \equiv 0$ for $n\geq 1$, $J_{\rm{inhom}}(r, t) \equiv 1$.
Moreover, if $Y$ is stationary, (\ref{JfunSTPP}) reduces to (\ref{StatJfun}).

\subsection{Relationship to $K$-functions}

Spatio-temporal $K$-functions may be obtained as second order approximations 
of $J_{\rm inhom}(r,t)$. To see this, recall that \cite{GabrielDiggle} defines
SIRS (with isotropy) by replacing the second order stationarity of $\Xi$ 
by the stronger condition that the pair correlation function 
$g((x,t),(y,s))=\bar{g}(u,v)$ depends only on the spatial distances 
$u=\|x-y\|$ and the temporal distances $v=|t-s|$. They then introduce 
a spatio-temporal inhomogeneous $K$-function by setting
\begin{align*}
&K_{\rm inhom}(r,t)
=
\int_{S_r^t} \bar{g}(\|x_1\|,|t_1|) d(x_1,t_1)
= 
\omega_d
\int_{-t}^{t} \int_{0}^{r} \bar{g}(u,v)u^{d-1}\,du\,dv,
\end{align*}
where 
$\omega_d/d=\pi^{d/2}/\Gamma(1+d/2)=\kappa_d$ is the volume of the unit ball 
in $\R^d$ (see e.g.\ \cite[p.\ 14]{SKM}). Note that the second equality follows 
from a change to hyperspherical coordinates. If in addition $Y$ is IRMS, 
\begin{align*}
J_{\rm inhom}(r,t) - 1
&=
-
\bar{\lambda}
\bigg(
\omega_d
\int_{-t}^{t}
\int_{0}^{r} \bar{g}(u,v)u^{d-1}\,du\,dv
- \ell(S_r^t)
\bigg)
+
\sum_{n=2}^{\infty} \frac{(-\bar{\lambda})^n}{n!} J_n(r,t)
\\
&\approx - \bar{\lambda}\left(K_{\rm inhom}(r,t) - \ell(S_r^t)\right),
\end{align*}
whereby $K_{\rm inhom}(r,t)$ may be viewed as a (scaled) second order 
approximation of $J_{\rm inhom}(r,t)$. In relation hereto, it should be noted 
that even if the product densities exist only up to some finite order $m$,
we may still obtain an approximation of $J_{\rm inhom}$ by truncating its 
series representation at $n=m$.

Returning now to the original definition of SIRS, where $\bar{\lambda}>0$ 
and $\Xi$ is second order stationary, we may extend the definition of the 
inhomogeneous $K$-function in \cite{BaddeleyEtAl} to the spatio-temporal 
setting by defining 
\begin{align}
\label{InhomK}
K_{\rm inhom}^*(r,t) 
= 
\frac{1}{\ell(A)}
\E\left[
\sum_{(x_1,t_1),(x_2,t_2)\in Y}^{\neq}
\frac
{
\1\{(x_1,t_1)\in A,
\|x_1-x_2\|\leq r,
|t_1-t_2|\leq t\}
}
{\lambda(x_1,t_1)\lambda(x_2,t_2)}
\right]
\end{align}
for $r,t\geq0$ and some set $A=B\times C\in\BB(\R^d\times\R)$ with $\ell(A)>0$. 
By Lemma~\ref{LemmaKfun} below, the definition does not depend on the choice
of $A$.

\begin{lemma}
\label{LemmaKfun}
For any $A=B\times C\in\BB(\R^d\times\R)$ for which $\ell(A)>0$, 
$K_{\rm inhom}^*(r,t)=\mathcal{K}_{\Xi}(S_r^t\setminus\{(0,0)\})$,  
the reduced second factorial moment measure of $\Xi$ evaluated at 
$S_r^t$ (see e.g.\ \cite[Section~12.6]{DVJ2}).
\end{lemma}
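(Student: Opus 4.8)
The plan is to recognise both sides of the claimed identity as evaluations of the second factorial moment measure of the weighted random measure $\Xi=\sum_{(x,t)\in Y}\delta_{(x,t)}/\lambda(x,t)$ and to exploit its translation invariance. Write
\[
\M^{(2)}_{\Xi}(D)
=
\E\left[
\sum^{\neq}_{(x_1,t_1),(x_2,t_2)\in Y}
\frac{\1\{((x_1,t_1),(x_2,t_2))\in D\}}{\lambda(x_1,t_1)\lambda(x_2,t_2)}
\right],
\qquad D\in\BB((\R^d\times\R)^2),
\]
which is the second factorial moment measure of $\Xi$ in the sense of \cite[Section~12.6]{DVJ2}; since $\bar{\lambda}>0$ and $\rho^{(2)}$ exists as the density of the (locally finite) second factorial moment measure of $Y$, one has $\M^{(2)}_{\Xi}(D)\le\bar{\lambda}^{-2}$ times that measure of $D$, so $\M^{(2)}_{\Xi}$ is locally finite. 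First I would observe that the cylinder $S_r^t$ is symmetric, so that the event $\{\|x_1-x_2\|\le r,\ |t_1-t_2|\le t\}$ is exactly $\{(x_2,t_2)-(x_1,t_1)\in S_r^t\}$; hence, for $A=B\times C$ as in the statement,
\[
\ell(A)\,K^*_{\rm inhom}(r,t)
=
\int\int
\1\{(u,v)\in A\}\,\1\{(u',v')-(u,v)\in S_r^t\}\,
\M^{(2)}_{\Xi}(d(u,v),d(u',v')).
\]

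Second, I would use that second order stationarity of $\Xi$ means precisely that $\M^{(2)}_{\Xi}$ is invariant under the diagonal translations $((u,v),(u',v'))\mapsto((u,v)+(a,b),(u',v')+(a,b))$, and that $\E[\Xi(A)]=\int_A\lambda(u,v)^{-1}\lambda(u,v)\,d(u,v)=\ell(A)$, so $\Xi$ has unit intensity. The standard disintegration of a translation invariant locally finite measure on $(\R^d\times\R)^2$ (see \cite[Section~12.6]{DVJ2}) then gives that, under the measure preserving change of variables $((u,v),(u',v'))\mapsto((u,v),w)$ with $w=(u',v')-(u,v)$, the image of $\M^{(2)}_{\Xi}$ equals $\ell\otimes\mathcal{K}_{\Xi}$, where $\mathcal{K}_{\Xi}$ is the reduced second factorial moment measure of $\Xi$. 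Substituting this into the last display and applying Fubini's theorem,
\[
\ell(A)\,K^*_{\rm inhom}(r,t)
=
\int_{\R^d\times\R}\1\{(u,v)\in A\}\left(\int_{\R^d\times\R}\1\{w\in S_r^t\}\,\mathcal{K}_{\Xi}(dw)\right)d(u,v)
=
\ell(A)\,\mathcal{K}_{\Xi}(S_r^t).
\]
Finally, because the factorial sum runs over distinct points of the simple process $Y$, the diagonal $\{w=(0,0)\}$ is excluded from $\M^{(2)}_{\Xi}$, which forces $\mathcal{K}_{\Xi}(\{(0,0)\})=0$ and hence $\mathcal{K}_{\Xi}(S_r^t)=\mathcal{K}_{\Xi}(S_r^t\setminus\{(0,0)\})$. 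Dividing by $\ell(A)>0$ yields the identity, and since the right hand side does not involve $A$, this simultaneously establishes that $K^*_{\rm inhom}(r,t)$ in (\ref{InhomK}) is well defined.

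The only genuinely delicate ingredient is the disintegration step: one has to know that a locally finite measure on $(\R^{d+1})^2$ invariant under the diagonal copy of $\R^{d+1}$ is the push-forward of $\ell\otimes\mathcal{K}_{\Xi}$ under $(z,w)\mapsto(z,z+w)$ for a unique locally finite $\mathcal{K}_{\Xi}$, and that the intensity factor appearing there is $1$ thanks to the intensity reweighting. I would either spell this out or simply cite \cite[Section~12.6]{DVJ2}; the remaining steps are the symmetry observation on $S_r^t$, a change of variables and Fubini's theorem.
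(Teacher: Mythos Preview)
Your argument is correct. Both your proof and the paper's establish the identity by exploiting the translation invariance of the second-order structure of $\Xi$ together with the fact that $\Xi$ has unit intensity, so the approaches are close; the difference lies in which characterisation of $\mathcal{K}_{\Xi}$ is invoked. The paper first shows $\Lambda_{\Xi}=\ell$, then appeals to the existence of reduced Palm measures $P_{\Xi}^{!y}$ for $\Xi$ (\cite[Proposition~13.1.IV]{DVJ2}), applies the Campbell--Mecke formula to rewrite $K^*_{\rm inhom}(r,t)$ as $\E^{!(0,0)}[\Xi(S_r^t)]$, and finally identifies this Palm expectation with $\mathcal{K}_{\Xi}(S_r^t)$. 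You bypass Palm measures altogether and work directly with the factorisation of the translation-invariant second factorial moment measure $\M^{(2)}_{\Xi}$ from \cite[Section~12.6]{DVJ2}, which is precisely the definition of $\mathcal{K}_{\Xi}$ referenced in the lemma; this makes your route marginally more self-contained for the statement as phrased, while the paper's route ties in more naturally with the Palm machinery used elsewhere in the article.
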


\begin{proof}
By the Campbell formula, the intensity measure of $\Xi$ is given by 
\[
\Lambda_{\Xi}(A) = \E \left[
 \sum_{(x,t)\in Y}\frac{1}{\lambda(x,t)} \1_A(x,t) 
\right] = \ell(A),
\]
so it is locally finite and has density $1$. Hence, by 
\cite[Proposition 13.1.IV.]{DVJ2}, 
there exist reduced Palm measures $P_{\Xi}^{!y_1}(R)$, $y_1\in\R^d\times\R$, 
$R\in\mathcal{N}$, such that
\begin{align*}
K_{\rm inhom}^*(r,t) 
&= 
\frac{1}{\ell(A)}
\E\left[
\int_{\R^d\times\R} 
\1_{A}(y_1)
\Xi((y_1 + S_r^t)\setminus\{y_1\}) \,
\Xi(dy_1)
\right]
\\
&
=
\frac{1}{\ell(A)} 
\int_{\R^d\times\R}  \E^{!y} \left[
  1_A(y) \, \Xi( y + S_r^t) 
\right] dy =  \E^{!(0,0)} [\Xi( S_r^t)]
= \mathcal{K}_{\Xi}(S_r^t)
\end{align*}
and this competes the proof.
\end{proof}

It is not hard to see that under the stronger assumptions of
\cite{GabrielDiggle}, $K_{\rm inhom}^*(r,t)=K_{\rm inhom}(r,t)$. 

\section{Representation results}
\label{SectionRepresentation}

Being based on a series of integrals of $n$-point correlation functions, 
Definition \ref{Jfun} highlights the fact that $J_{\rm{inhom}}$
involves interactions of all orders but it is not very convenient in 
practice. The goal of this section is to give representations, which 
are easier to interpret.

\subsection{Representation in terms of generating functionals}

As for purely spatial point processes, we may express 
$J_{\rm{inhom}}$ in terms of the generating functionals $G$ and $G^{!\cdot}$
by making appropriate choices for the functions $v=1-u$ \cite{MCJfun}. Indeed, we may set
\beann
u_{r,t}^{y}(x,s)
=
\frac
{
\bar{\lambda}
\1\{
\|a-x\|\leq r, 
|b-s|\leq t\}
}
{\lambda(x,s)},
\quad y=(a,b)\in\R^d\times\R
,
\eeann
and define the \emph{inhomogeneous spatio-temporal nearest 
neighbour distance distribution function} as 
\bea
\label{PalmGen}
G_{\rm inhom}(r,t) 
&=& 1 - G^{!y}(1-u_{r,t}^{y})
\\
&=& 1 - 
\E^{!(a,b)}
\left[
\prod_{(x,s)\in Y}\left(
1-
\frac{\bar{\lambda}\1\{\|a-x\|\leq r, |b-s|\leq t\}}{\lambda(x,s)}
\right)
\right]
\nn
\eea
and the \emph{inhomogeneous spatio-temporal empty space function} as 
\beann
F_{\rm inhom}(r,t) = 1 -G(1-u_{r,t}^{y})
= 1 - \E\left[
\prod_{(x,s)\in Y}\left(
1-
\frac{\bar{\lambda}\1\{\|a-x\|\leq r, |b-s|\leq t\}}{\lambda(x,s)}
\right)
\right]
\eeann
for $r,t\geq0$, 
under the convention that empty products take the value one. 
Then, the representation theorem below tells us that $G_{\rm inhom}(r,t)$ and $G_{\rm inhom}(r,t)$ 
do not depend on the choice of $y$ and, furthermore, that 
$J_{\rm inhom}(r,t)$ may be expressed through $G_{\rm inhom}(r,t)$ and 
$F_{\rm inhom}(r,t)$. 

\begin{thm}
\label{ReprSTPP}
Let $Y$ be an IRMS spatio-temporal point process and assume that 
\[
\limsup_{n\rightarrow\infty}
\left(
\frac{\bar{\lambda}^n}{n!}
\int_{S_r^t}\cdots\int_{S_r^t} 
\frac{\rho^{(n)}((x_1,t_1),\ldots,(x_n,t_n))}
{\lambda(x_1,t_1)\cdots\lambda(x_n,t_n)}
\prod_{i=1}^{n}dx_i dt_i.
\right)^{1/n}
<1.
\]
Then $G_{\rm inhom}(r,t)$ and $G_{\rm inhom}(r,t)$ are $\ell$-almost everywhere constant with 
respect to $y=(a,b)\in\R^d\times\R$ and 
$J_{\rm inhom}(r,t)$ in expression (\ref{JfunSTPP}) can be written as
\beann
J_{\rm inhom}(r,t)
=
\frac{1-G_{\rm inhom}(r,t)}{1-F_{\rm inhom}(r,t)}
\eeann
for all $r,t\geq0$ such that $F_{\rm inhom}(r,t)\neq1$. 
\end{thm}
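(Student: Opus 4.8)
The plan is to expand both $F_{\rm inhom}(r,t)$ and $G_{\rm inhom}(r,t)$ using the series representation of the generating functional in~(\ref{SeriesGF}) and its Palm analogue, and then match the resulting series against Definition~\ref{Jfun}. First I would substitute $u = u_{r,t}^y$ into~(\ref{SeriesGF}). Since $u_{r,t}^y(x_k,s_k) = \bar\lambda\,\1\{(a,b)-(x_k,s_k)\in S_r^t\}/\lambda(x_k,s_k)$, the $n$-th term of $1-F_{\rm inhom}(r,t) = G(1-u_{r,t}^y)$ becomes
\[
\frac{(-\bar\lambda)^n}{n!}
\int_{(a,b)-S_r^t}\!\!\cdots\!\int_{(a,b)-S_r^t}
\frac{\rho^{(n)}((x_1,t_1),\ldots,(x_n,t_n))}{\lambda(x_1,t_1)\cdots\lambda(x_n,t_n)}
\prod_{i=1}^n dx_i\,dt_i,
\]
where the $\lambda$-factors in $u_{r,t}^y$ cancel against those appearing when one writes $\rho^{(n)}$ in terms of the intensity-reweighted product density. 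By IRMS (Definition~\ref{d:IRMS}) the integrand, via~(\ref{CorrFunc}), is a translation-invariant function of its arguments; translating by $(a,b)$ shows the integral equals its value at $y=(0,0)$, so $F_{\rm inhom}$ does not depend on $y$, and the hypothesised $\limsup$ condition (which is exactly a Cauchy root test on these very terms) guarantees absolute convergence and legitimises term-by-term manipulation. The same argument applied to $G^{!y}(1-u_{r,t}^y)$, using the series for the reduced-Palm generating functional together with the fact that the Palm product densities of an IRMS process are themselves translation-invariant after intensity reweighting, gives the $y$-independence of $G_{\rm inhom}$.

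Next I would identify the ratio. Writing $\rho^{(n)}/\prod_k\lambda$ via~(\ref{CorrFunc}) as a sum over partitions of products of $\xi_{|D_j|}$, the key combinatorial observation is that the reduced Palm product density $\rho^{(n)}_{(0,0)}$ of an IRMS process satisfies
\[
\frac{\rho^{(n)}_{(0,0)}((x_1,t_1),\ldots,(x_n,t_n))}{\prod_{k=1}^n\lambda(x_k,t_k)}
=
\sum_{k=1}^n\sum_{D_1,\ldots,D_k}\prod_{j=1}^k
\xi_{|D_j|+\1\{1\notin\bigcup\}}\bigl(\ldots,(0,0)\bigr),
\]
i.e.\ the block containing the Palm point $(0,0)$ carries an extra argument. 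Equivalently, $\rho^{(n)}_{(0,0)}(\,\cdot\,)/\bigl(\lambda\text{-product}\bigr)$ is obtained from the formula for $\rho^{(n+1)}$ by fixing the first point at the origin and removing its $\lambda$-factor — this is the standard relation between Palm densities and product densities. Plugging this into the series for $1-G_{\rm inhom}$ and dividing by the series for $1-F_{\rm inhom}$, one checks that the quotient telescopes precisely to $1+\sum_{n\ge1}\frac{(-\bar\lambda)^n}{n!}\int_{S_r^t}\!\cdots\!\int_{S_r^t}\xi_{n+1}((0,0),(x_1,t_1),\ldots,(x_n,t_n))\prod dx_i\,dt_i = J_{\rm inhom}(r,t)$. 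This is exactly the algebraic identity proved for the purely spatial case in~\cite{MCJfun}, and since all the objects here live on $\R^d\times\R$ with the supremum metric and $S_r^t$ plays the role of the ball $B[0,r]$, the same computation carries over verbatim.

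The main obstacle I anticipate is purely bookkeeping rather than conceptual: organising the partition sums so that the division of the two series is seen to collapse to the single series defining $J_{\rm inhom}$. Concretely, one must show that the generating-functional identity $G^{!(0,0)}(1-u) = G(1-u)\cdot\bigl(1+\sum_n\frac{(-\bar\lambda)^n}{n!}J_n(r,t)\bigr)$ holds as an identity of absolutely convergent series, and the cleanest route is to recognise that both sides, as functionals of $u$, have $n$-point-correlation expansions whose coefficients must agree — which reduces to the classical factorisation of Palm product densities cited above. The one genuinely new point to check, relative to the spatial case, is that the supremum-metric ball $S_r^t$ is invariant under the translations used, which is immediate since $S_r^t$ is a cylinder centred at the origin; with that in hand the $y$-independence and the final ratio formula follow, completing the proof.
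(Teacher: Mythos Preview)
Your proposal is correct and follows essentially the same route as the paper: expand $G(1-u_{r,t}^y)$ via~(\ref{SeriesGF}), use the relation $\rho^{(n)}_{!y}=\rho^{(n+1)}(y,\cdot)/\lambda(y)$ (which the paper establishes for a.e.\ $y$ by equating a Campbell-formula computation with a reduced Campbell--Mecke computation rather than citing it) to get the analogous expansion of $G^{!y}(1-u_{r,t}^y)$, and then split the $\xi$-partition expansion of $\rho^{(n+1)}((0,0),\ldots)/\prod\lambda$ according to which block contains the origin to obtain the factorisation $G^{!y}(1-u_{r,t}^y)=J_{\rm inhom}(r,t)\,G(1-u_{r,t}^0)$. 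Your displayed formula for the Palm density is garbled as written (the symbol $\1\{1\notin\bigcup\}$ is not well-formed), but the surrounding prose shows you have the right picture, and the ``bookkeeping obstacle'' you anticipate is exactly the partition-splitting computation the paper carries out explicitly.
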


\begin{proof}
From expression (\ref{SeriesGF}) it follows that 
\begin{align*}
&G(1-u_{r,t}^{y})=
1 + \sum_{n=1}^{\infty} \frac{(-\bar{\lambda})^n}{n!} 
\int_{y+S_r^t} \cdots \int_{y+S_r^t}
\frac{\rho^{(n)}((x_1,t_1),\ldots,(x_n,t_n))}
{\lambda(x_1,t_1)\cdots\lambda(x_n,t_n)}
\prod_{i=1}^{n}dx_i dt_i, 
\end{align*}
since, by assumption, the series on the right hand side is absolutely
convergent. 
Note that $G(1-u_{r,t}^{y})$ is a constant for almost
all $(a,b)\in\R^d\times\R$ by the IRMS assumption. 
Furthermore, by an inclusion-exclusion argument, 
\begin{align*}
G^{!y}(1-u_{r,t}^{y})
=
1 + \sum_{n=1}^{\infty} \frac{(-\bar{\lambda})^n}{n!} 
\E^{!(a,b)}
\left[
\sum_{(x_1,t_1),\ldots,(x_n,t_n)\in Y}^{\neq}
\prod_{i=1}^{n}
\frac{\1\{\|a-x_i\|\leq r, |b-t_i|\leq t\}}{\lambda(x_i,t_i)}
\right],
\end{align*}
which is well defined by the local finiteness of $Y$ (the factor $1/n!$ 
removes the implicit ordering of $\sum^{\neq}$). 
To show the independence of the choice of $(a,b)$, 
for any bounded $A = B\times C\in\BB(\R^{d}\times\R)$ and any $n\geq1$, 
consider now the non-negative measurable function 
\beann
g_{r,t}^A((a,b),\varphi) 
= 
\frac{\1\{(a,b)\in A\}}{\lambda(a,b)}
\sum_{(x_1,t_1),\ldots,(x_n,t_n)\in\varphi}^{\neq}
\prod_{i=1}^{n}
\frac{
\1\{\|a-x_i\|\leq r, |b-t_i|\leq t\}
}{\lambda(x_i,t_i)}.
\eeann
By rewriting the expression for $g_{r,t}^A((a,b),Y\setminus\{(a,b)\})$, 
recalling the Campbell formula (\ref{ExpProdDens}) and taking 
the translation invariance of $\xi_n$ into consideration, we obtain
\beann
&&\E
\left[
\sum_{(a,b)\in Y}
g_{r,t}^A((a,b),Y\setminus\{(a,b)\})
\right]
=
\\
&=&
\E
\left[
\sum_{(a,b),(x_1,t_1),\ldots,(x_n,t_n)\in Y}^{\neq}
\frac{\1\{(a,b)\in A\}}{\lambda(a,b)}
\prod_{i=1}^{n}
\frac{\1\{\|a-x_i\|\leq r, |b-t_i|\leq t\}}{\lambda(x_i,t_i)}
\right]
\\
&=&
\int_{B\times C} 
\Bigg(
\int_{S_r^t+y} \cdots \int_{S_r^t+y}
\frac{\rho^{(n+1)}((a,b),y_1, \ldots, y_n)}{
\lambda(a,b)\lambda(y_1)\cdots\lambda(y_n)} 
dy_1 \cdots dy_n
\Bigg)da db
\\
&=&
\int_{A} 
\left(
\int_{S_r^t} \cdots \int_{S_r^t}
\frac{\rho^{(n+1)}((0,0),(x_1,t_1),\ldots,(x_n,t_n))}{\lambda(0,0)\lambda(x_1,t_1)\cdots\lambda(x_n,t_n)} 
\prod_{i=1}^{n}dx_i dt_i
\right) da db.
\eeann
On the other hand, by the reduced Campbell--Mecke formula (\ref{CMthm}), 
\begin{align*}
&\E
\left[
\sum_{(a,b)\in Y}
g_{r,t}^A((a,b),Y\setminus\{(a,b)\})
\right]
=
\\
&=
\int_{A}
\E^{!(a,b)}
\left[
\sum_{(x_1,t_1),\ldots,(x_n,t_n)\in Y}^{\neq}
\prod_{i=1}^{n}
\frac{
\1\{\|a-x_i\|\leq r, |b-t_i|\leq t\}
}{\lambda(x_i,t_i)}
\right]
dx ds.
\end{align*}
Hereby the two expressions above equal each other for all $A$ and consequently the integrands are equal for $\ell$-almost all $y=(a,b)\in\R^d\times\R$.
Hence, for almost all $y=(a,b)\in\R^d\times\R$,  
\begin{align*}
&G^{!y}(1-u_{r,t}^{y})= 1+
\\
&+ \sum_{n=1}^{\infty} \frac{(-\bar{\lambda})^n}{n!} 
\int_{S_r^t}\cdots\int_{S_r^t}
\frac{\rho^{(n+1)}((0,0),(x_2,t_2),\ldots,(x_{n+1},t_{n+1}))}{
\lambda(0,0)\lambda(x_2,t_2)\cdots\lambda(x_{n+1},t_{n+1})} 
dx_2 dt_2\cdots dx_{n+1} dt_{n+1}
\\
&=
1 + \sum_{n=1}^{\infty} \frac{(-\bar{\lambda})^n}{n!} 
\int_{S_r^t}\cdots\int_{S_r^t}
\sum_{k=1}^{n+1}
\sum_{D_1,\ldots,D_k}
\prod_{j=1}^{k}
\xi_{|D_j|}(\{z_i:i\in D_j\})
dz_2\cdots dz_{n+1},
\end{align*} 
where $z_{1} \equiv (0,0)$ and $z_{i}=(x_i,t_i)$, $i=2,\ldots,n+1$. 
Recall that $\sum_{D_1,\ldots,D_k}$ is a sum over all possible $k$-sized
partitions $\{D_1,\ldots,D_k\}$, $\emptyset\neq D_j\in\mathcal{P}_{n+1}$, 
where $\mathcal{P}_{n+1}$ denotes the power set of $\{1,\ldots,n+1\}$.

With the convention that $\sum_{k=1}^{0}=1$, 
we may split the above expression into terms based on whether the index
 sets $D_j$ contain the index $1$ (i.e.\ whether $\xi_{|D_j|}$ includes 
$z_{1} \equiv (0,0)$) to obtain 
\beann
G^{!y}(1-u_{r,t}^{y}) 
&=& 1+ 
\sum_{n=1}^{\infty} \frac{(-\bar{\lambda})^n}{n!} 
\sum_{D\in\mathcal{P}_n}
\overbrace{
\int_{S_r^t}\cdots\int_{S_r^t}
\xi_{|D|+1}(0,z_1,\ldots,z_{|D|})
dz_1\cdots dz_{|D|}
}^{=J_{|D|}(r,t)}
\\
&&\times
\sum_{k=1}^{n-|D|}
\sum_{\substack{D_1,\ldots,D_k\neq\emptyset\text{ disjoint}\\ \cup_{j=1}^{k}D_j = \{1,\ldots,n\}\setminus D}}
\prod_{j=1}^{k} 
I_{|D_j|},
\eeann
where 
$
I_{n} =
\int_{S_r^t}\cdots\int_{S_r^t}
\xi_{n}(z_2,\ldots,z_{n+1})
dz_2\cdots dz_{n+1}.
$ 
The right hand side of the above expression may be written as 
\[
\left(
1+ \sum_{n=1}^{\infty} \frac{(-\bar{\lambda})^n}{n!} J_{n}(r,t)
\right)
\Bigg(
1+ \sum_{m=1}^{\infty} \frac{(-\bar{\lambda})^m}{m!} 
\sum_{k=1}^{m}
\sum_{\substack{D_1,\ldots,D_k\neq\emptyset\text{ disjoint}\\ \cup_{j=1}^{k}D_j = \{1,\ldots,m\}}}
\prod_{j=1}^{k} 
I_{|D_j|}
\Bigg),
\]
which equals
\[
J_{\rm inhom}(r,t)
\Bigg(
1+ \sum_{m=1}^{\infty} \frac{(-\bar{\lambda})^m}{m!} 
\int_{S_r^t} \cdots \int_{S_r^t}
\frac{\rho^{(m)}((x_1,t_1),\ldots,(x_m,t_m))}
{\lambda(x_1,t_1)\cdots\lambda(x_m,t_m)}
\prod_{i=1}^{m}dx_i dt_i, 
\Bigg)
\]
by Fubini's theorem and the definition of the $n$-point correlation functions. 
The absolute convergence of the individual sums in the above product imply 
the absolute convergence of 
$G^{!y}(1-u_{r,t}^{y}) = J_{\rm inhom}(r,t) G(1-u_{r,t}^{0})$ and this in turn 
completes the proof. 
\end{proof}

The intuition behind $G_{\rm inhom}(r,t)$ and $F_{\rm inhom}(r,t)$ is best
seen when $Y$ is stationary. In this case
$
u_{r,t}^{0}(x,s)
=
\1\{ (x,s) \in S_r^t \}
$
and hence
\[
F_{\rm inhom}(r,t)
= 1-\E\left[\prod_{(x,s)\in Y}\1\{(x,s)\notin S_r^t\}\right]
= 1-\P(Y\cap S_r^t=\emptyset)
=F(r,t),
\]
the empty space function in expression (\ref{StatJfun}). 
Similarly, $G_{\rm inhom}(r,t)$ reduces to the distribution function
of the nearest neighbour distance when $Y$ is stationary, and the
$J$-function is indeed a generalisation of (\ref{e:J}).

\subsection{Representation in terms of conditional intensities}

Some families of point processes, notably Gibbsian ones \cite{MCbook}, are 
defined in terms of their Papangelou conditional intensity 
$\lambda(\cdot,\cdot;\cdot)$.
Below we show that for such processes, $J_{\rm inhom}$ may be represented 
in terms of $\lambda(\cdot,\cdot;\cdot)$.

\begin{thm}
\label{ThmPapangelou}
Let the assumptions of Theorem~\ref{ReprSTPP} hold and assume, in addition, 
that $Y$ admits a conditional intensity $\lambda(\cdot,\cdot;\cdot)$. 
Write 
$W_{(a,b)}(Y)
=
\prod_{(x,s)\in Y}\left(
1 - u_{r,t}^{(a,b)}(x,s)
\right)
$.
Then 
$\E\left[\lambda(a,b;Y) W_{(a,b)}(Y) / \lambda(a,b) \right]  > 0$ implies
$E[W_{(a,b)}(Y)] > 0$ and
\beann
J_{\rm inhom}(r,t)
=
\E\left[\frac{\lambda(a,b;Y)}{\lambda(a,b)}W_{(a,b)}(Y)\right]
/\E[W_{(a,b)}(Y)]
\eeann
for almost all $(a,b)\in\R^d\times\R$. 
\end{thm}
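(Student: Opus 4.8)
The plan is to derive this Papangelou representation directly from Theorem~\ref{ReprSTPP} by expressing the numerator $1-G_{\rm inhom}(r,t) = G^{!y}(1-u_{r,t}^y)$ via the Georgii--Nguyen--Zessin formula~(\ref{Papangelou}). First I would recall that, by Theorem~\ref{ReprSTPP} and the definitions~(\ref{PalmGen}) of $G_{\rm inhom}$ and of $F_{\rm inhom}$, we have
\[
J_{\rm inhom}(r,t) = \frac{G^{!y}(1-u_{r,t}^y)}{G(1-u_{r,t}^y)} = \frac{\E^{!(a,b)}\big[W_{(a,b)}(Y)\big]}{\E\big[W_{(a,b)}(Y)\big]}
\]
for almost all $y=(a,b)$, where the denominator is $\E[W_{(a,b)}(Y)] = G(1-u_{r,t}^{(a,b)})$, which by the IRMS assumption is $\ell$-a.e.\ constant in $(a,b)$ and equals $1-F_{\rm inhom}(r,t)$. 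So it suffices to re-express the reduced Palm expectation $\E^{!(a,b)}[W_{(a,b)}(Y)]$ in terms of the conditional intensity.

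The key step is to apply~(\ref{Papangelou}) with the test function $g(x,s,\varphi) = \1_A(x,s)\, W_{(x,s)}(\varphi)/\lambda(x,s)$ for an arbitrary bounded $A=B\times C$ with $\ell(A)>0$, noting $W_{(x,s)}(\varphi)$ is a measurable function of $(x,s)$ and $\varphi$ and is bounded by $1$ (each factor $1-u_{r,t}^{(x,s)}$ lies in $[0,1]$ by the standing assumption $\bar\lambda\le\lambda$), so the function is non-negative and the formula applies. On one side I get
\[
\E\Big[\sum_{(a,b)\in Y} \frac{\1_A(a,b)}{\lambda(a,b)}\, W_{(a,b)}(Y\setminus\{(a,b)\})\Big] = \int_A \E\Big[\frac{\lambda(a,b;Y)}{\lambda(a,b)}\, W_{(a,b)}(Y)\Big]\, da\, db,
\]
while the reduced Campbell--Mecke formula~(\ref{CMthm}) applied to the same left-hand side gives $\int_A \E^{!(a,b)}[W_{(a,b)}(Y)]\, da\, db$. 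Since $A$ is arbitrary, the two integrands agree for $\ell$-almost all $(a,b)$, i.e.\ $\E^{!(a,b)}[W_{(a,b)}(Y)] = \E[\lambda(a,b;Y)W_{(a,b)}(Y)/\lambda(a,b)]$ a.e. Combining with the displayed ratio for $J_{\rm inhom}$ and using that the denominator $\E[W_{(a,b)}(Y)]$ is a.e.\ constant and nonzero on the set where $F_{\rm inhom}(r,t)\neq1$ yields the claimed identity. The implication $\E[\lambda(a,b;Y)W_{(a,b)}(Y)/\lambda(a,b)]>0 \Rightarrow \E[W_{(a,b)}(Y)]>0$ is then immediate: if $\E[W_{(a,b)}(Y)]=0$ then $W_{(a,b)}(Y)=0$ a.s., forcing the numerator to vanish as well.

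I expect the main obstacle to be bookkeeping around the almost-everywhere qualifications rather than anything deep: one must be careful that the a.e.-constancy of the denominator (from Theorem~\ref{ReprSTPP}/IRMS), the a.e.-identity for the numerator (from equating integrands over arbitrary $A$), and the a.e.-constancy of $G^{!y}(1-u^y_{r,t})$ hold on a common co-null set so the ratio makes sense pointwise there, and that on this set the denominator is genuinely nonzero exactly when $F_{\rm inhom}(r,t)\neq1$. A secondary point worth stating explicitly is the measurability and boundedness of $(a,b,\varphi)\mapsto W_{(a,b)}(\varphi)$, which legitimises both applications of the integral formulas; this follows from writing $W_{(a,b)}(\varphi) = \prod_{(x,s)\in\varphi}(1-u_{r,t}^{(a,b)}(x,s))$ as a (finite, on bounded support) product of jointly measurable factors in $[0,1]$.
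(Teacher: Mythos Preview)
Your proposal is correct and follows essentially the same route as the paper: apply the Georgii--Nguyen--Zessin formula~(\ref{Papangelou}) and the reduced Campbell--Mecke formula~(\ref{CMthm}) to the same test functional $\1_A(a,b)W_{(a,b)}(\varphi)/\lambda(a,b)$, equate integrands over arbitrary bounded $A$ to identify $\E^{!(a,b)}[W_{(a,b)}(Y)]$ with $\E[\lambda(a,b;Y)W_{(a,b)}(Y)/\lambda(a,b)]$ a.e., and then invoke Theorem~\ref{ReprSTPP}. Your treatment of the implication (via $W_{(a,b)}(Y)\in[0,1]$ and the contrapositive) and your remarks on measurability and a.e.\ bookkeeping are also in line with, and slightly more explicit than, the paper's argument.
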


\begin{proof}
We already know that $\E[ W_{(a,b)}(Y) ]$ is a constant for almost
all $(a,b)\in\R^d\times\R$.
Since $0\leq W_{(a,b)}(Y)\leq1$, if $E[W_{(a,b)}(Y)]=0$, then 
$W_{(a,b)}(Y)\stackrel{a.s.}{=}0$ and this implies that 
$\E[\frac{\lambda(a,b;Y)}{\lambda(a,b)}W_{(a,b)}(Y)]  =0$. 
Note that by the Georgii--Nguyen--Zessin formula (\ref{Papangelou}) in combination with 
(\ref{CMthm}), for any bounded 
$A = B\times C\in\BB(\R^{d}\times\R)$,  
\begin{align*}
&\int_{A}
\E^{!(a,b)}
\left[
\frac{1}{\lambda(a,b)}
\prod_{(x,s)\in Y}\left(
1-
\frac{\bar{\lambda}\1\{\|a-x\|\leq r, |b-s|\leq t\}}{\lambda(x,s)}
\right)
\right]
\lambda(a,b)
da db
\\
&=
\int_{A}
\E
\left[
\frac{\lambda(a,b;Y)}{\lambda(a,b)}
\prod_{(x,s)\in Y}\left(
1-
\frac{\bar{\lambda}\1\{\|a-x\|\leq r, |b-s|\leq t\}}{\lambda(x,s)}
\right)
\right]
da db,
\end{align*}
whereby the integrands are equal for almost all $(a,b)\in\R^d\times\R$ 
and the claim follows from Theorem~\ref{ReprSTPP}.
\end{proof}

Since $\E[\lambda(a,b;Y)] = \lambda(a,b)$, we immediately see that
$$
J_{\rm inhom}(r,t)\geq1 
\Longleftrightarrow
\Cov\left(
\lambda(a,b;Y)
, W_{(a,b)}(Y)\right)\geq0
$$ 
and 
$$
J_{\rm inhom}(r,t)\leq1 
\Longleftrightarrow
\Cov\left(
\lambda(a,b;Y)
, W_{(a,b)}(Y)\right)\leq0.
$$ 
In words, for clustered point processes, $\lambda(a,b; Y)$ tends to
be large if $(a,b)$ is near to points of $Y$ whereas $W_{(a,b)}(Y)$ tends 
to be large when there are few points of $Y$ close to $(a,b)$. Thus, in 
this case, the two random variables are negatively correlated and the 
$J$-function is smaller than one. A dual reasoning applies for regular 
point processes, but \cite{Bedford} warns against drawing too strong
conclusions.

\subsection{Scaling}

In expression (\ref{JfunSTPP}), we consider distances on the spaces 
$\R^d$ and $\R$ separately. Instead, we could have used the supremum
distance on $\R^d\times\R$ and the closed $d$-metric balls 
$B[0,r] = S_r^r$, $r\geq0$ to define
$J_n(r)=J_n(r,r)$ and
\begin{equation}
\label{e:J1}
J_{\rm inhom}(r) = 1 + \sum_{n=1}^{\infty} \frac{(-\bar{\lambda})^n}{n!} J_n(r).
\end{equation}
When the pair correlation function only depends on the spatial and temporal 
distances, set
$K_{\rm inhom}^*(r)=K_{\rm inhom}^*(r,r)$ and $K_{\rm inhom}(r)=K_{\rm inhom}(r,r)$,
whence $K_{\rm inhom}^*(r)=K_{\rm inhom}(r)$ and 
$
J_{\rm inhom}(r) -1
\approx
- \bar{\lambda}(K_{\rm inhom}(r) - \ell(B[0,r])).
$

In the remainder of this subsection, we argue that (\ref{JfunSTPP}) may
be obtained from (\ref{e:J1}) by scaling. 
Let $c=(c_S, c_T)\in(0,\infty)^2$ and apply the bijective transformation 
$(y,s)\mapsto (c_S y, c_T s)$ to each point of the IRMS spatio-temporal
point process  $Y$ to obtain 
\[
cY=\sum_{(y,s)\in Y}\delta_{(c_S y, c_T s)}.
\]
Through a change of variables and the Campbell formula, one obtains 
\[
\rho_{cY}^{(n)}((x_1,t_1),\ldots,(x_n,t_n))
= c_S^{-dn}c_T^{-n} \rho^{(n)}((x_1/c_S,t_1/c_T),\ldots,(x_n/c_S,t_n/c_T)),
\]
so that 
$\lambda_{cY}(x,t)=c_S^{-d}c_T^{-1}\lambda(x/c_S,t/c_T)$ and 
$\bar{\lambda}_{cY}=\inf_{(x,t)}\lambda_{cY}(x,t) = c_S^{-d}c_T^{-1}\bar{\lambda}$.
Hence,
\[
\xi_n^{cY}((x_1,t_1),\ldots,(x_n,t_n)) = 
\xi_n((x_1/c_S,t_1/c_T),\ldots,(x_n/c_S,t_n/c_T))
\]
whence $cY$ is IRMS if and only if $Y$ is and, whenever well-defined,
\begin{equation}
\label{Jthin}
J_{\rm inhom}^{cY}(r,t) 
=
J_{\rm inhom}\left(\frac{r}{c_S}, \frac{t}{c_T} \right).
\end{equation}
In conclusion, by taking $c_S=1$, and $c_T=r/t$, any $J_{\rm inhom}(r,t)$
may be obtained from $J_{\rm inhom}(r)$ through scaling.

\section{Examples of spatio-temporal point processes}
\label{SectionExamples}

Below we will consider three families of models, each representing a 
different type of interaction.

\subsection{Poisson processes}
\label{SectionPoisson}

The inhomogeneous Poisson processes may be considered the benchmark 
scenario for lack of interaction between points. As we saw in
Section~\ref{S:defJ}, for a Poisson process  $J_{\rm inhom}(r,t)\equiv1$.
Alternative proofs may be obtained from the representation Theorems
\ref{ReprSTPP} and \ref{ThmPapangelou}, by noting that the Palm distributions
equal $P$ by Slivnyak's theorem \cite{SchneiderWeil}, or that the intensity 
function and the Papangelou conditional intensity coincide almost everywhere.

\subsection{Location dependent thinning}
\label{SectionThinning}

Given a stationary STPP $Y$ with product densities $\rho^{(n)}$, $n\geq1$, 
intensity $\lambda>0$ and $J$-function $J(r,t)$, consider some measurable function 
$p:\R^d\times\R\rightarrow(0,1]$ with $\bar{p}=\inf_{(x,t)}p(x,t)>0$. 
Location dependent thinning of $Y$ is the scenario in which a point
$(x,t) \in Y$ is retained with probability $p(x,t)$. Denote the
resulting thinned process by $Y_{\rm th}$.

The product densities of $Y_{\rm th}$ are 
\[
\rho_{\rm th}^{(n)}((x_1,t_1),\ldots,(x_n,t_n)) = 
\rho^{(n)}((x_1,t_1),\ldots,(x_n,t_n))\prod_{i=1}^{n}p(x_i,t_i)
\]
by \cite[Section~11.3]{DVJ2}, whereby 
$\lambda_{\rm th}(x,t)=\lambda p(x,t)>0$ and 
the $n$-point correlation functions of $Y_{\rm th}$ and $Y$ coincide. 
Hence, $Y_{\rm th}$ is IRMS with 
$\bar{\lambda}=\inf_{(x,t)}\lambda_{\rm th}(x,t)=\lambda\bar{p}$ and 
\beann
J_{\rm inhom}^{\rm th}(r,t) = 1 + 
\sum_{n=1}^{\infty} \frac{(-\lambda\bar{p})^n}{n!} J_n(r,t)
\eeann
for all $r,t\geq0$ for which the series converges. 
Here $J_n(r,t)$ is the $n$-th coefficient in the series expansion (\ref{JfunSTPP})) of the $J$-function of the original process $Y$. 

A more informative expression for $J_{\rm inhom}^{\rm th}$ can be obtained by noting that,
by \cite[Eq.~(5.3)--(5.4)]{SKM}, the generating functional of $Y_{\rm th}$ is 
given by $G_{\rm th}(v)=G(1-p+pv)$, where $G(\cdot)$ is the generating 
functional of $Y$. Hence, since applying thinning to the reduced Palm 
distribution of $Y$ is equivalent to Palm conditioning in the thinned 
process, 
\begin{align*}
J_{\rm inhom}^{\rm th}(r,t) 
=
\frac{G_{\rm th}^{!(0,0)}(1-\bar{p}\1\{\cdot\in S_r^t\}/p)}{G_{\rm th}(1-\bar{p}\1\{\cdot\in S_r^t\}/p)}
=
\frac{G^{!(0,0)}(1-\bar{p}\1\{\cdot\in S_r^t\})}{G(1-\bar{p}\1\{\cdot\in S_r^t\})}
=
\frac{\E^{!(0,0)}[(1-\bar{p})^{Y(S_r^t)}]}{\E[(1-\bar{p})^{Y(S_r^t)}]}
\end{align*}
when Theorem~\ref{ReprSTPP} applies. 

When a Papangelou conditional intensity exists for $Y$, by recalling that $\lambda=\lambda(x,t)=\E[\lambda(x,t;Y)]$ and applying the combination of (\ref{CMthm}) and (\ref{Papangelou}) to the restriction of the function $g(a,b,Y)= (1-\bar{p})^{Y((a,b)+S_r^t)}$ to arbitrary bounded space-time domains, the previous expression becomes
\begin{align}
\label{JfunThin}
J_{\rm inhom}^{\rm th}(r,t) 
=
\frac{\E[\lambda(0,0;Y) (1-\bar{p})^{Y(S_r^t)}]}
{\lambda\E[(1-\bar{p})^{Y(S_r^t)}]}
.
\end{align}

\subsubsection{Thinned hard core process}

The spatio-temporal hard core process is a stationary STPP defined through its Papangelou 
conditional intensity 
\bea
\label{PapangelouStrauss}
\lambda_Y(a,b;Y) = \beta \, \1\{Y\cap ((a,b)+S_{R_S}^{R_T})=\emptyset\}
=\beta \prod_{(x,t)\in Y}\1\{(x,t)-(a,b)\notin S_{R_S}^{R_T}\},
\eea
where $(a,b)\in\R^d\times\R$. Moreover, $\beta>0$ is a model parameter and $R_S>0$ 
and $R_T>0$ are, respectively, the spatial hard core distance and the temporal hard core distance. In words,
since realisations a.s.\ do not contain points that violate the 
spatial and temporal hard core constraints, i.e.\ 
$\P^{!(0,0)}(Y(S_{R_S}^{R_T})>0)=0$, there is inhibition. 

By thinning $Y$ with some suitable measurable retention function 
$p:\R^d\times\R\rightarrow(0,1]$, $\bar{p}=\inf_{(x,t)}p(x,t)>0$, 
we obtain an IRMS hard core STPP $Y_{\rm th}$.

\begin{lemma}
For a hard core process $Y$, $\beta/\lambda\geq1$. 
If either $(r,t)\in[0,R_S]\times[0,R_T]$ or $(r,t)\in[R_S,\infty)\times[R_T,\infty)$, $J(r,t)$ is increasing in $r$ and $t$. 
Moreover, when $(r,t)\in[0,R_S]\times[0,R_T]$ we have that $1\leq J(r,t)\leq\beta/\lambda$ and when $(r,t)\in[R_S,\infty)\times[R_T,\infty)$, $J(r,t)=\beta/\lambda$. 
When $R_T=R_S=R>0$, so that $S_{R_S}^{R_T}=B[0,R]$, $J(r)=J(r,r)$ is increasing and satisfies $1\leq J(r)<\beta/\lambda$ for $r\in[0,R)$ and $J(r)=\beta/\lambda$ for $r\geq R$. 

For a thinned hard core process, $J_{\rm inhom}^{\rm th}(r,t)\geq1$ for $r\leq R_S$ and $t\leq R_T$. 
\end{lemma}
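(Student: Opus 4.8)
The statement has several parts. The plan is to treat the stationary hard core process first, then deduce the thinned case from it.

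For the stationary process, I would begin from the Papangelou representation of the stationary $J$-function, which by Theorem~\ref{ThmPapangelou} (or directly from the stationary version in \cite{MCBaddeley}) gives
\[
J(r,t)=\frac{\E[\lambda_Y(0,0;Y)\,\1\{Y\cap S_r^t=\emptyset\}]}{\lambda\,\P(Y\cap S_r^t=\emptyset)}.
\]
Plugging in the hard core conditional intensity $\lambda_Y(0,0;Y)=\beta\,\1\{Y\cap S_{R_S}^{R_T}=\emptyset\}$ yields
\[
J(r,t)=\frac{\beta}{\lambda}\cdot\frac{\P(Y\cap(S_r^t\cup S_{R_S}^{R_T})=\emptyset)}{\P(Y\cap S_r^t=\emptyset)}.
\]
From this single formula everything follows. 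Taking $r=t=0$ gives $J(0,0)=(\beta/\lambda)\,\P(Y\cap S_{R_S}^{R_T}=\emptyset)$, but since $\P^{!(0,0)}(Y(S_{R_S}^{R_T})>0)=0$ and hence (by Campbell--Mecke) $\P(Y\cap S_{R_S}^{R_T}=\emptyset)$ relates to... actually the cleaner route to $\beta/\lambda\geq1$ is: since $J(r,t)\to\beta/\lambda$ and $J$ should be $\geq1$ on small scales — no, I must prove $\beta/\lambda\geq1$ independently. The intensity satisfies $\lambda=\E[\lambda_Y(0,0;Y)]=\beta\,\P(Y\cap S_{R_S}^{R_T}=\emptyset)\leq\beta$, giving $\beta/\lambda\geq1$ directly. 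For monotonicity: when $(r,t)\in[0,R_S]\times[0,R_T]$ we have $S_r^t\subseteq S_{R_S}^{R_T}$, so the numerator ratio collapses to $\P(Y\cap S_{R_S}^{R_T}=\emptyset)/\P(Y\cap S_r^t=\emptyset)$, which is a ratio of empty-space probabilities over nested sets; as $r,t$ increase toward $(R_S,R_T)$ the denominator decreases toward the numerator, so the ratio increases toward $1$, and at the same time $J(r,t)=(\beta/\lambda)\cdot[\text{that ratio}]\le\beta/\lambda$, with value $1$ at the corner only if $\P(Y\cap S_{R_S}^{R_T}=\emptyset)=\P(Y\cap S_r^t=\emptyset)$, i.e.\ at $r=R_S,t=R_T$ it equals $\beta/\lambda$; at $r=t=0$ it equals $(\beta/\lambda)\P(Y\cap S_{R_S}^{R_T}=\emptyset)=1$. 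Hence $1\le J(r,t)\le\beta/\lambda$ on this regime. When $(r,t)\in[R_S,\infty)\times[R_T,\infty)$ we have $S_{R_S}^{R_T}\subseteq S_r^t$, so $S_r^t\cup S_{R_S}^{R_T}=S_r^t$ and the ratio is identically $1$, giving $J(r,t)\equiv\beta/\lambda$; this is trivially (weakly) increasing. The diagonal case $R_S=R_T=R$ is the specialization $S_{R}^{R}=B[0,R]$, and for $r<R$ the strict inequality $J(r)<\beta/\lambda$ holds because $S_r^r\subsetneq B[0,R]$ and the empty-space probability of the strictly larger set is strictly smaller (using $\bar\lambda>0$, so that $\P(Y\cap(B[0,R]\setminus S_r^r)\neq\emptyset)>0$).

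For the thinned hard core process, I would use formula (\ref{JfunThin}), which here reads
\[
J_{\rm inhom}^{\rm th}(r,t)=\frac{\E[\lambda_Y(0,0;Y)\,(1-\bar p)^{Y(S_r^t)}]}{\lambda\,\E[(1-\bar p)^{Y(S_r^t)}]}
=\frac{\beta}{\lambda}\cdot\frac{\E[\1\{Y\cap S_{R_S}^{R_T}=\emptyset\}\,(1-\bar p)^{Y(S_r^t)}]}{\E[(1-\bar p)^{Y(S_r^t)}]}.
\]
When $r\le R_S$ and $t\le R_T$, we have $S_r^t\subseteq S_{R_S}^{R_T}$, so on the event $\{Y\cap S_{R_S}^{R_T}=\emptyset\}$ we also have $Y(S_r^t)=0$, hence $(1-\bar p)^{Y(S_r^t)}=1$ there. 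Therefore the numerator of the ratio equals $\P(Y\cap S_{R_S}^{R_T}=\emptyset)$, while the denominator is $\E[(1-\bar p)^{Y(S_r^t)}]\le 1$. Thus
\[
J_{\rm inhom}^{\rm th}(r,t)=\frac{\beta}{\lambda}\cdot\frac{\P(Y\cap S_{R_S}^{R_T}=\emptyset)}{\E[(1-\bar p)^{Y(S_r^t)}]}\ge\frac{\beta}{\lambda}\,\P(Y\cap S_{R_S}^{R_T}=\emptyset)=\frac{\beta}{\lambda}\cdot\frac{\lambda}{\beta}=1,
\]
using $\lambda=\beta\,\P(Y\cap S_{R_S}^{R_T}=\emptyset)$ from the stationary intensity computation. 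This gives $J_{\rm inhom}^{\rm th}(r,t)\ge1$ on $[0,R_S]\times[0,R_T]$.

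**Main obstacle.** The genuinely routine part is the algebra of nested sets; the one point requiring care is justifying that Theorems~\ref{ReprSTPP} and~\ref{ThmPapangelou} (equivalently, formula (\ref{JfunThin})) actually \emph{apply} here, i.e.\ that the relevant $\limsup$ convergence condition holds for the thinned hard core process — this needs a short argument that the intensity-reweighted product densities of the hard core process (which, being hereditarily dominated by Poisson-type bounds for a locally stable process, satisfy $\rho^{(n)}\le\beta^n$ so that the integrands are bounded by $(\bar\lambda^n/n!)(\beta/\bar\lambda)^n\ell(S_r^t)^n$, whose $n$-th root tends to $0$) make the series absolutely convergent for all $r,t$. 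I would state this convergence as a preliminary remark and then proceed with the identities above.
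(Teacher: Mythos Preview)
Your argument is correct and essentially identical to the paper's: both start from $\lambda=\beta\,\P(Y\cap S_{R_S}^{R_T}=\emptyset)$, plug the hard core conditional intensity into the Papangelou representation to obtain $J(r,t)=(\beta/\lambda)\,\P(Y\cap(S_{R_S}^{R_T}\cup S_r^t)=\emptyset)/\P(Y\cap S_r^t=\emptyset)$, and then read off all claims from the two nesting regimes $S_r^t\subseteq S_{R_S}^{R_T}$ and $S_{R_S}^{R_T}\subseteq S_r^t$. The only cosmetic difference is in the thinned case, where the paper bounds the Palm expectation $\E^{!(0,0)}[(1-\bar p)^{Y(S_r^t)}]\geq\E^{!(0,0)}[(1-\bar p)^{Y(S_{R_S}^{R_T})}]=1$ directly while you reach the same inequality via the equivalent formula~(\ref{JfunThin}); your explicit check of the convergence hypothesis through local stability ($\rho^{(n)}\leq\beta^n$) is a point the paper leaves implicit.
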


\begin{proof}
Noting that $\lambda=\lambda(0,0)=\E[\lambda_Y(0,0;Y)]=\beta\P(Y\cap S_{R_S}^{R^T}=\emptyset)\leq\beta$ we find that $\beta/\lambda\geq1$. 
Furthermore, 
through Theorem~\ref{ThmPapangelou} and expression (\ref{PapangelouStrauss}) we obtain
\begin{align*}
&J(r,t)
=
\frac{\E\left[\lambda_Y(0,0;Y) \1\{Y\cap S_r^t=\emptyset\} \right]}
{\lambda(0,0) \E\left[\1\{Y\cap S_r^t=\emptyset\}\right]}
=
\frac{\beta}{\lambda}
\frac{\P\left(Y\cap S_{R_S}^{R_T}=\emptyset, Y\cap S_r^t=\emptyset \right)}
{\P\left(Y\cap S_r^t=\emptyset\right)}
.
\end{align*}
Hence, when both $r\geq R_S$ and $t\geq R_T$ we have that $S_{R_S}^{R_T} \subseteq S_r^t$ and consequently $J(r,t)=\beta/\lambda$. 
Moreover, when $r\leq R_S$ and $t\leq R_T$, so that $S_r^t\subseteq S_{R_S}^{R_T}$, expression (\ref{StatJfun}) gives us $J(r,t)=1/(1-F(r,t))$, which is increasing in both $r\in[0,R_S]$ and $t\in[0,R_T]$ and satisfies $J(r,t)\geq1$. 
By setting $r=R_S$ and $t=R_T$ we confirm that $J(r,t)=\beta/\lambda\geq1$. 

Specialising to $J(r)=J(r,r)$, when $r\leq R$ we have that $J(r)=1/(1-F(r,r))$, which is increasing to $\beta/\lambda$, and when $r>R$,  $J(r)=\beta/\lambda$. 

When $Y$ is thinned and $r\leq R_S$ and $t\leq R_T$, 
\begin{align*}
J_{\rm inhom}^{\rm th}(r,t) = \frac{\E^{!(0,0)}[(1-\bar{p})^{Y(S_r^t)}]}{\E[(1-\bar{p})^{Y(S_r^t)}]}
\geq
\frac{\E^{!(0,0)}[(1-\bar{p})^{Y(S_{R_S}^{R_T})}]}{\E[(1-\bar{p})^{Y(S_r^t)}]}
=
\frac{1}{\E[(1-\bar{p})^{Y(S_r^t)}]}
\geq
1.
\end{align*}

\end{proof}

\subsection{Log-Gaussian Cox processes}
\label{SectionCox}

Our final example concerns spatio-temporal versions of log-Gaussian Cox 
processes (see e.g.\ \cite{ColeJone91,MollerSyversveen,Rath96}). 
In words, these models are spatio-temporal Poisson processes for which 
the intensity functions are given by realisations of log-Gaussian random 
fields \cite{Adler,AdlerTaylor}.

Recall that a Gaussian random field is completely determined by its mean function
$\mu(x,t)$ and its covariance function 
$C((x,t),(y,s))$, $(x,t),(y,s)\in\R^d\times\R$, and that by Bochner's theorem $C$ must be positive definite (see e.g.\ \cite[Section~2.4]{Handbook}). Now, a spatio-temporal 
log-Gaussian Cox process $Y$ has random intensity function given by
$$
\exp\left\{ \mu(x,t) + Z(x,t)\right\}, \quad (x,t)\in\R^d\times\R, 
$$
where $Z=\{Z(x,t)\}_{(x,t)\in\R^d\times\R}$, is a zero-mean spatio-temporal 
Gaussian random field. Note that the variance function of $X$ is given 
by $\sigma^2(x,t)=C((x,t),(x,t))$ and the correlation function by 
$r((x,t),(y,s)) = C((x,t),(y,s))/(\sigma(x,t)\sigma(y,s))$. 
By \cite[Section~6.2]{DVJ1} or \cite[Section~5.2]{SKM},
\[
\frac{\rho^{(n)}((x_1,t_1),\ldots,(x_n,t_n))}{
\lambda(x_1,t_1)\cdots\lambda(x_n,t_n)}
=\exp\left\{\sum_{i<j} C((x_i,t_i),(x_j,t_j))\right\}
\]
and the intensity function of $Y$ is
\bea
\label{IntesityLGCP}
\lambda(x,t)=\exp\left\{ \mu(x,t)+\sigma^2(x,t)/2\right\}.
\eea
Therefore, if $\inf_{(x,t)}\exp\{\mu(x,t)\} > 0$ so that $\lambda(x,t)$ is bounded
away from zero, under the additional condition that 
$C((x,t),(y,s))= C(x-y,t-s)$, $Y$ is IRMS.
In this case, $\sigma^2(x,t) = C(0,0)= \sigma^2$ and $Z$ is stationary.
To exclude trivial cases, we shall assume that $\sigma^2 > 0$.

Before we proceed, note that we must impose conditions on $r$ 
to ensure that the function $\exp\{ \mu(x,t) + Z(x,t) \}$ is integrable 
and defines a locally finite random measure. Further details are
given in the Appendix. Henceforth, we will assume that $\mu(x,t)$ is 
continuous and bounded with $\bar{\mu}=\inf_{(x,t)}\mu(x,t)>-\infty$, 
so that $\bar{\lambda}=\exp\{\bar{\mu}+\sigma^2/2\}$,  
and that $r$ is such that $Z$ a.s.\ has continuous sample paths.
Combining \cite[Proposition~6.2.II]{DVJ1} with \cite[(5.35)]{SKM}, we obtain,
under the assumptions of Theorem \ref{ReprSTPP}, 
\beann
J_{\rm inhom}(r,t) 
&=&
\frac
{\E\left[\e^{Z(0,0)}
\exp\left\{- \int_{S_r^t}\e^{\bar{\mu} + Z(x,s)}dx ds\right\}\right]}
{\E[\e^{Z(0,0)}]
\E\left[\exp\left\{- \int_{S_r^t}\e^{\bar{\mu} + Z(x,s)}dx ds\right\}\right]} 
\eeann
upon noting that the Palm distribution of the driving random measure of our
log-Gaussian Cox process $Y$ is $\exp\{Z(x,t)\}$-weighted. 
Note here that the Papangelou conditional intensity of $Y$ exists and is given by 
$\lambda(x,t;Y)=\E[\exp\{\mu(x,t) + Z(x,t)\} | Y]$ 
(see e.g.\ \cite{MollerWaagepetersenArticle}).

\begin{lemma}
For a log-Gaussian Cox processes, 
when the above conditions are imposed on $\mu$ and $C$, $J_{\rm inhom}(r,t)\leq1$ for all $r,t\geq0$. 
\end{lemma}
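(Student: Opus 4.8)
The plan is to use the generating-functional representation of $J_{\rm inhom}$ just derived, namely
\[
J_{\rm inhom}(r,t)
=
\frac
{\E\left[\e^{Z(0,0)}
\exp\left\{- \int_{S_r^t}\e^{\bar{\mu} + Z(x,s)}dx ds\right\}\right]}
{\E[\e^{Z(0,0)}]\,
\E\left[\exp\left\{- \int_{S_r^t}\e^{\bar{\mu} + Z(x,s)}dx ds\right\}\right]} ,
\]
and to show that the numerator is at most the product in the denominator. Introducing the random variables $U = \e^{Z(0,0)}$ and $V = \exp\{-\int_{S_r^t}\e^{\bar\mu + Z(x,s)}\,dx\,ds\}$, the claim $J_{\rm inhom}(r,t)\le 1$ is precisely $\E[UV]\le \E[U]\,\E[V]$, i.e.\ $\Cov(U,V)\le 0$. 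So the heart of the matter is a negative-correlation (association) statement for two functionals of the stationary Gaussian field $Z$.

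The key observation is that both $U$ and $V$ are \emph{monotone} functionals of the sample path of $Z$ in the appropriate (coordinatewise) partial order, but with \emph{opposite} monotonicity: $U = \e^{Z(0,0)}$ is increasing in $Z$, while $V = \exp\{-\int_{S_r^t}\e^{\bar\mu+Z(x,s)}\,dx\,ds\}$ is decreasing in $Z$ (a pointwise increase of the path increases the integral inside the exponential and hence decreases $V$). For Gaussian processes one has Pitt's theorem / the Gaussian correlation-inequality machinery: if $C(x-y,t-s)\ge 0$ everywhere (nonnegative covariance function), then the finite-dimensional distributions of $Z$ are positively associated, and consequently any increasing functional and any decreasing functional of $Z$ are negatively correlated. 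Applying this with $U$ increasing and $V$ decreasing gives $\Cov(U,V)\le 0$, hence $\E[UV]\le\E[U]\E[V]$, hence $J_{\rm inhom}(r,t)\le 1$. One would phrase this either via Pitt's characterization of association for Gaussian vectors, or equivalently by approximating the integral $\int_{S_r^t}\e^{\bar\mu+Z(x,s)}\,dx\,ds$ by Riemann sums over a finite grid of space-time points, applying the FKG/association inequality for the (positively correlated) Gaussian vector at those grid points to the pair of monotone functions, and passing to the limit by dominated convergence (both $U$ and $V$ are bounded by $\e^{Z(0,0)}$, which is integrable, and $V\le 1$). An alternative, perhaps cleaner, route is to absorb the $\e^{Z(0,0)}$ weight into the measure: the $\e^{Z(0,0)}$-tilted law of $Z$ is again Gaussian, with the same covariance $C$ but mean shifted by $C((\cdot,\cdot),(0,0))$, so $\E[U]^{-1}\E[UV] = \widetilde\E[V]$ where $\widetilde\E$ is expectation under that shifted Gaussian law; since a mean shift by the (nonnegative) function $C(\cdot-0,\cdot-0)$ can only increase the path pointwise and $V$ is decreasing, $\widetilde\E[V]\le \E[V]$, which is the desired inequality — here the only probabilistic input is the Cameron--Martin/Gaussian-shift identity plus monotonicity, avoiding association altogether.

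The main obstacle is ensuring the correlation inequality is applied legitimately: one must have $C\ge 0$ on all of $\R^d\times\R$ for the association argument (this is exactly the hypothesis ``the above conditions are imposed on $\mu$ and $C$'' that the lemma invokes — I would read ``positive definite'' in the statement together with the pointwise nonnegativity $C(x-y,t-s)\ge0$, which is the relevant condition here), and one must justify the interchange of limits when replacing the integral by finite sums, which follows from continuity of sample paths (already assumed to make the random measure locally finite) together with boundedness of $V$ and integrability of $\e^{Z(0,0)}$. For the shift-of-mean alternative the delicate point is instead checking that the $\e^{Z(0,0)}$-weighting corresponds to the asserted Gaussian translate — but this is standard (it is the same computation underlying formula (\ref{IntesityLGCP}) and the Palm-weighting remark preceding the lemma) and requires only that $C((\cdot,\cdot),(0,0))\ge0$, again the same sign condition. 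Either way, once the sign condition on $C$ is in hand the conclusion is immediate; I would present the shifted-Gaussian argument as the primary one for brevity and remark that it is equivalent to the association route.
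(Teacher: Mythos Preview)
Your association route --- reduce to $\Cov(U,V)\le 0$, invoke Pitt's theorem under the hypothesis $C\ge 0$, approximate the integral in $V$ by Riemann sums via sample-path continuity, and pass to the limit by dominated convergence --- is exactly the paper's proof. You also correctly flag that pointwise nonnegativity of $C$ is the operative hypothesis here (the paper asserts ``$Z$ has positive correlation function'' inside the proof rather than listing it among the standing assumptions, but it is indeed what is being used).

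Your Cameron--Martin alternative is a genuinely different route not in the paper: tilting by $\e^{Z(0,0)}$ replaces $Z$ by $Z + C(\cdot,0)$ in law, and then monotonicity of $V$ together with $C(\cdot,0)\ge 0$ finishes the argument without any appeal to association. This is shorter and makes the role of the sign condition on $C$ completely transparent; the paper's approach has the modest advantage of being a direct instance of a named inequality (Pitt) that readers can cite. Both arguments are correct, and both rest on the same hypothesis $C\ge 0$.
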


\begin{proof}
First, observe that $J_{\rm inhom}(r,t)\leq1$ is equivalent to 
$\Cov(\e^{Z(0,0)}, 
\e^{- \e^{\bar{\mu}} \int_{S_r^t}\e^{Z(x,s)}dx ds}
)\leq0$. 
Further, note that by the a.s.\ sample path continuity of $Z$, 
$$
\e^{- \e^{\bar{\mu}} \int_{S_r^t}\e^{Z(x,s)}dx ds}
\stackrel{a.s.}{=}
\lim_{n\rightarrow\infty}  \e^{- \e^{\bar{\mu}} \sum_{(x_i,s_i)\in S(n)} c_{i,n}\e^{Z(x_i,s_i)}},
$$ 
where $S(n)\subseteq S_r^t$, $n\geq1$, are Riemann partitions. 
Since $Z$ has positive correlation function, Pitt's theorem \cite{Pitt} tells us that 
$Z$ is an associated family of random variables. Hereby 
$\Cov(\e^{Z(0,0)}, 
\exp\{- \e^{\bar{\mu}} \sum_{(x_i,s_i)\in S(n)} c_{i,n} \e^{Z(x_i,s_i)}\}
)\leq0$ for any $n\geq1$ 
and 
the result follows 
from taking the limit in the last covariance and applying dominated convergence. 
\end{proof}

\section{Estimation}
\label{SectionEstimation}

Assume that we observe an IRMS STPP $Y$ within some compact spatio-temporal 
region $W_S\times W_T\subseteq\R^d\times\R$ and obtain the realisation 
$\{(x_i,t_i)\}_{i=1}^{m}$, $m=Y(W_S\times W_T)$. 
The goal of this section is to derive estimators for 
$G_{\rm inhom}(r,t)$, $F_{\rm inhom}(r,t)$ and $J_{\rm inhom}(r,t)$.
In order to deal with possible edge effects we will apply a minus 
sampling scheme \cite{SKM,CronieSarkka}.  For clarity of
exposition, we assume that the intensity function is known. 

Denote the boundaries of $W_S$ and $W_T$ by $\partial W_S$ and $\partial W_T$, 
respectively. Further, write 
$
W_S^{\ominus r}$ $ = \{x\in W_S:d_{\R^d}(x,\partial W_S)\geq r\}$
$= \{x\in W_S: x + B_{\R^d}[0,r]\subseteq W_S\}$
for the eroded spatial domain and, similarly, let 
$W_T^{\ominus t}$ $ = \{s\in W_T:d_{\R}(s,\partial W_T)\geq t\}$.
For given $r,t\geq0$, we define an estimator of $1- G_{\rm inhom}(r,t)$ by
\begin{equation}
\label{NumeratorEst}
\frac
{1}{|Y\cap(W_S^{\ominus r}\times W_T^{\ominus t})|}
\sum_{(x',s')\in Y\cap(W_S^{\ominus r}\times W_T^{\ominus t})}
\left[
\prod_{(x,s)\in (Y\setminus\{(x',s')\})\cap((x',s')+S_r^t)}
\left(1 - \frac{\bar{\lambda}}{\lambda(x,s)}\right)
\right]
\end{equation}
and, given a finite point grid $L\subseteq W_S\times W_T$, we 
estimate $1 - F_{\rm inhom}(r,t)$ by
\begin{equation}
\label{DenominatorEst}
\frac
{1}{|L\cap(W_S^{\ominus r}\times W_T^{\ominus t})|}
\sum_{l\in L\cap(W_S^{\ominus r}\times W_T^{\ominus t})}
\left[
\prod_{(x,s)\in Y\cap(l+S_r^t)}
\left(1 - \frac{\bar{\lambda}}{\lambda(x,s)}\right)
\right].
\end{equation}
The ratio of (\ref{NumeratorEst}) and (\ref{DenominatorEst}) is
an estimator of $J_{\rm inhom}(r,t)$, cf.\ Theorem~\ref{ReprSTPP}.

\begin{thm}
Under the conditions of Theorem~\ref{ReprSTPP}, the estimator 
(\ref{DenominatorEst}) is unbiased and (\ref{NumeratorEst}) is ratio-unbiased. 
\end{thm}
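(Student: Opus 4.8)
The plan is to verify unbiasedness of (\ref{DenominatorEst}) and ratio-unbiasedness of (\ref{NumeratorEst}) by computing expectations via the Campbell and Campbell--Mecke formulae, exploiting the fact that the minus-sampling restriction to the eroded windows makes every relevant ball $l + S_r^t$ or $(x',s') + S_r^t$ lie entirely inside the observation window $W_S \times W_T$.

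First I would treat the denominator estimator (\ref{DenominatorEst}). Since $L$ is a fixed (non-random) finite grid, $|L\cap(W_S^{\ominus r}\times W_T^{\ominus t})|$ is a deterministic constant, so taking expectations passes through the outer sum term by term. For each fixed $l \in L\cap(W_S^{\ominus r}\times W_T^{\ominus t})$, I would write the product over $(x,s)\in Y\cap(l+S_r^t)$ as $\prod_{(x,s)\in Y}(1 - u_{r,t}^{l}(x,s))$, using that $u_{r,t}^{l}$ vanishes off $l + S_r^t$ and that, by the erosion, $l+S_r^t\subseteq W_S\times W_T$ so the observable configuration inside $l+S_r^t$ coincides with the full configuration of $Y$ there. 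Then $\E$ of this product is exactly $G(1-u_{r,t}^{l}) = 1 - F_{\rm inhom}(r,t)$ by the definition of $F_{\rm inhom}$ and Theorem~\ref{ReprSTPP} (which guarantees this value does not depend on $l$). Averaging the constant $1-F_{\rm inhom}(r,t)$ over the grid points gives unbiasedness.

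For the numerator estimator (\ref{NumeratorEst}), the normalisation $|Y\cap(W_S^{\ominus r}\times W_T^{\ominus t})|$ is random, which is exactly why one only claims \emph{ratio}-unbiasedness: I would show that the expectation of the numerator sum equals $(1-G_{\rm inhom}(r,t))$ times $\E|Y\cap(W_S^{\ominus r}\times W_T^{\ominus t})|$, so that the ratio of expectations is $1-G_{\rm inhom}(r,t)$. To compute $\E$ of the numerator sum, apply the reduced Campbell--Mecke formula (\ref{CMthm}) to the function $g(x',s',\varphi) = \1\{(x',s')\in W_S^{\ominus r}\times W_T^{\ominus t}\}\prod_{(x,s)\in\varphi\cap((x',s')+S_r^t)}(1-\bar\lambda/\lambda(x,s))$; the sum over $(x',s')\in Y$ with the removal of the point itself matches the $Y\setminus\{(x',s')\}$ in (\ref{NumeratorEst}), and again the erosion ensures $(x',s')+S_r^t\subseteq W_S\times W_T$ so the product is computable from the observed pattern. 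This yields $\int_{W_S^{\ominus r}\times W_T^{\ominus t}} \E^{!(x',s')}\big[\prod_{(x,s)\in Y}(1-u_{r,t}^{(x',s')}(x,s))\big]\lambda(x',s')\,dx'\,ds'$, and by (\ref{PalmGen}) the integrand's Palm expectation equals $G^{!(x',s')}(1-u_{r,t}^{(x',s')}) = 1-G_{\rm inhom}(r,t)$, a constant in $(x',s')$ by Theorem~\ref{ReprSTPP}. Pulling it out leaves $(1-G_{\rm inhom}(r,t))\int_{W_S^{\ominus r}\times W_T^{\ominus t}}\lambda(x',s')\,dx'\,ds' = (1-G_{\rm inhom}(r,t))\,\Lambda(W_S^{\ominus r}\times W_T^{\ominus t})$, and the Campbell formula identifies $\Lambda(W_S^{\ominus r}\times W_T^{\ominus t}) = \E|Y\cap(W_S^{\ominus r}\times W_T^{\ominus t})|$. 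Dividing gives ratio-unbiasedness.

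The main obstacle, such as it is, lies less in any deep estimate and more in bookkeeping: carefully justifying the interchange of expectation with the finite outer sums (trivial for the fixed grid $L$, and legitimate for the random sum over $Y$ precisely because Campbell--Mecke is stated with the ``infinite iff infinite'' proviso and all integrands here are bounded by $1$ on a bounded erosion window, so everything is finite), and making explicit that minus sampling is what lets us replace the \emph{observed} product over $Y\cap(W_S\times W_T)$ by the product over all of $Y$ inside each ball. One should also note that the equality ``$\E^{!(x',s')}[\cdots]$ is constant in $(x',s')$ for $\ell$-almost all $(x',s')$'' from Theorem~\ref{ReprSTPP} is all that is needed, since $\Lambda$ is absolutely continuous with respect to $\ell$, so the almost-everywhere exceptional set is $\Lambda$-null and does not affect the integral.
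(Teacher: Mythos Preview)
Your proposal is correct and follows essentially the same approach as the paper: apply the reduced Campbell--Mecke formula (\ref{CMthm}) to the numerator sum and identify the resulting Palm expectation with $1-G_{\rm inhom}(r,t)$ via (\ref{PalmGen}) and Theorem~\ref{ReprSTPP}, and use the generating functional expansion and IRMS (equivalently, Theorem~\ref{ReprSTPP}) to see that each grid-point term in (\ref{DenominatorEst}) has expectation $1-F_{\rm inhom}(r,t)$. Your additional bookkeeping remarks (the role of minus sampling in making the observed product coincide with the full product, and the $\ell$-a.e.\ constancy being enough because $\Lambda\ll\ell$) are sound refinements of what the paper leaves implicit.
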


\begin{proof}
We start with (\ref{NumeratorEst}) and note that 
$
\E[Y(W_S^{\ominus r}\times W_T^{\ominus t})] = $
$\Lambda(W_S^{\ominus r}\times W_T^{\ominus t})$.
By the reduced Campbell-Mecke formula (\ref{CMthm}),
\begin{align*}
&
\E\left[\sum_{(x',s')\in Y\cap(W_S^{\ominus r}\times W_T^{\ominus t})}
\prod_{(x,s)\in (Y\setminus\{(x',s')\})\cap((x',s')+S_r^t)}
\left(1 - \frac{\bar{\lambda}}{\lambda(x,s)}\right)
\right]
=
\\
&=
\int_{W_S^{\ominus r}\times W_T^{\ominus t}}
\E^{!(x',s')}
\left[
\prod_{(x,s)\in Y
}
\left(1 - \frac{\bar{\lambda}}{\lambda(x,s)}
\1\{(x- x', s- s')\in S_r^t\}
\right)
\right]
\lambda(x',s')
dx' ds'.
\end{align*}
By (\ref{PalmGen}), the expectation is equal to $G^{!0}(1-u_{r,t}^{0})$, 
from which the claimed ratio-unbiasedness follows. 

Turning to (\ref{DenominatorEst}), unbiasedness follows from the
assumed translation invariance of the $\xi_n$s and equation (\ref{SeriesGF}) 
under the conditions of Theorem~\ref{ReprSTPP}.
\end{proof}

In practice, the intensity function $\lambda(x,s)$ is not known. 
Therefore an estimator  $\widehat{\lambda}(x,s)$ will have to 
be obtained and then used as a plug-in in the above estimators. 
E.g.\ \cite{GabrielDiggle} considers kernel estimators for $\lambda(x,s)$ 
but stresses, however, that care has to be taken when 
$\widehat{\lambda}(x,s)$ is close to 0, since a change of bandwidth may 
cause $\widehat{\lambda}(x,s)=0$ for some $(x,s)$, which would be in 
violation of the assumption that $\bar{\lambda}>0$. 

\section{Numerical evaluations}
\label{SectionData}

In this section, we use the inhomogeneous $J$-function to quantify the 
interactions in a realisation of each of the three models discussed in 
Section~\ref{SectionExamples}. In order to do so, we work mostly in 
{\tt R} and exploit functions in the package \verb|spatstat| 
\cite{BaddeleyTurner}, in which versions of all summary statistics
discussed in this paper have already been implemented for purely spatial 
point processes, both in the general and the stationary case; the 
spatio-temporal $K$-function has been implemented in \verb|stpp|
\cite{GabrielDiggle}. To simulate log Gaussian Cox processes we use the 
package \verb|RandomFields| \cite{Schlather}. Realisations of spatio-temporal 
hard core processes can be obtained using the {\tt C++} library 
{\tt MPPLIB} \cite{Steenbeek}.

Throughout this section, realisations will be restricted to the observation
window $W_S\times W_T = [0,1]^2\times[0,1]$. The intensity function is either
known, or known up to a constant (for the thinned hard core process). Hence, 
since (\ref{NumeratorEst})--(\ref{DenominatorEst}) are defined in terms of
the ratio $\bar \lambda / \lambda(x,s)$, there is no need to plug in intensity 
function estimators.

\subsection{Poisson processes}

Consider a Poisson process $Y$ on $\R^2 \times \R$ with intensity function
$\lambda(x,y,t) = 750\e^{-1.5(y+t)}$ as in Section~\ref{SectionPoisson}. 
Note that $\bar{\lambda} = 750\e^{-3} \approx 37.34$ and that the expected number 
of observed points of $Y$ in $W_S\times W_T$ is $750(1-\e^{-1.5})^2/1.5^2$,
i.e.\ approximately $200$. A realisation with 220 points is shown in the top-left panel of 
Figure~\ref{RealisationPoi}. The temporal progress of the process is 
illustrated in the top-right panel of Figure~\ref{RealisationPoi}, which 
shows the cumulative number of points as a function of time, i.e.\ 
$N(t)=Y(W_S\times[0,t])$, $t\in[0,1]$. The lower row of Figure~\ref{RealisationPoi} shows two 
spatial projections. In the left panel, we display $Y\cap(W_S\times[0,0.5])$, 
in the right panel $Y\cap(W_S\times[0.5,1])$. 
Here the decline in intensity, with increasing $y$-coordinate, is clearly visible. 
Furthermore, a comparison of the two spatial projections illustrates the exponential decay in the intensity function.

\begin{figure}[htbp]
\begin{center}
\begin{tabular}{cc}
     	{\includegraphics[width=0.35\textwidth]{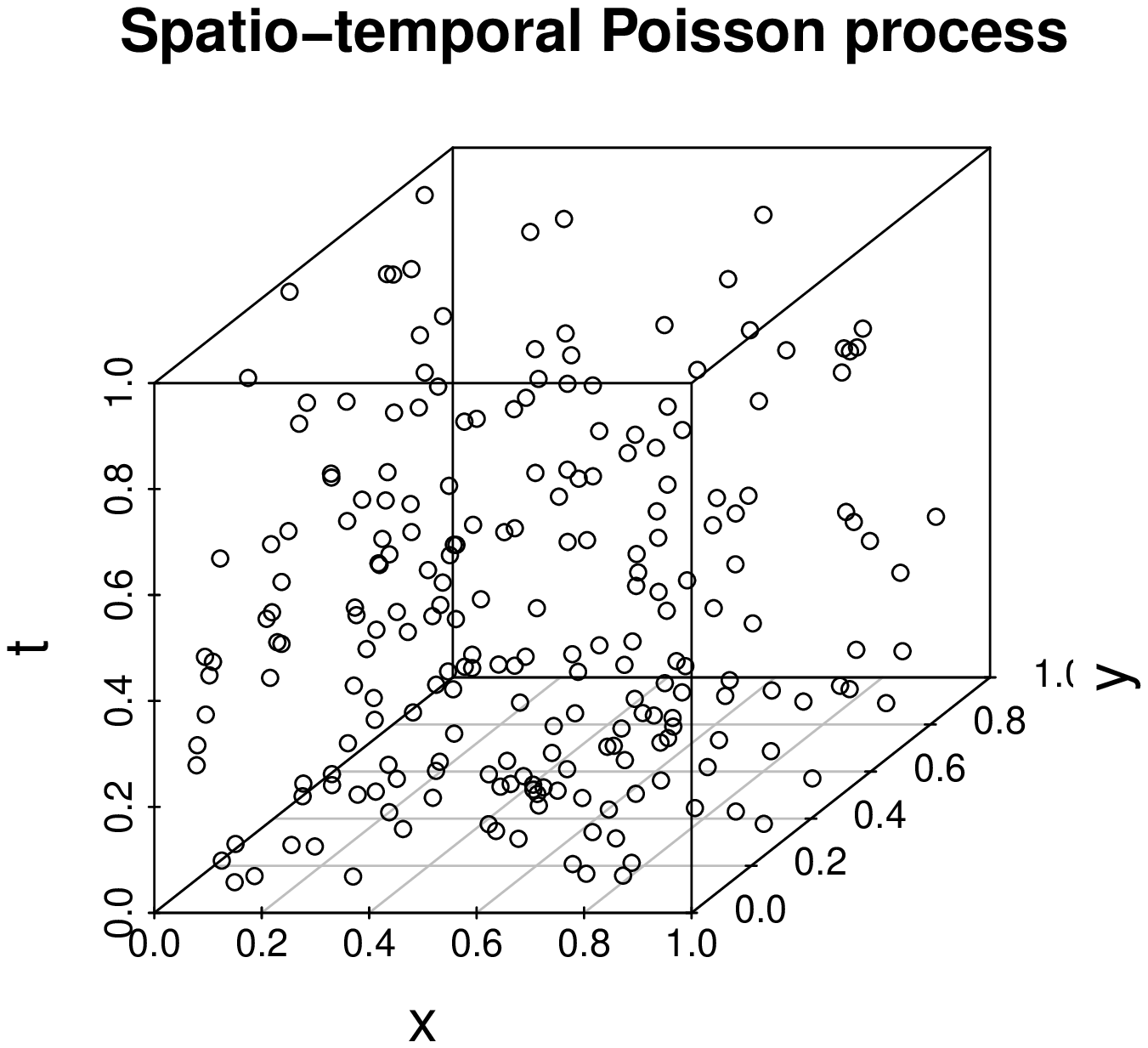}}
	&
     	{\includegraphics[width=0.35\textwidth]{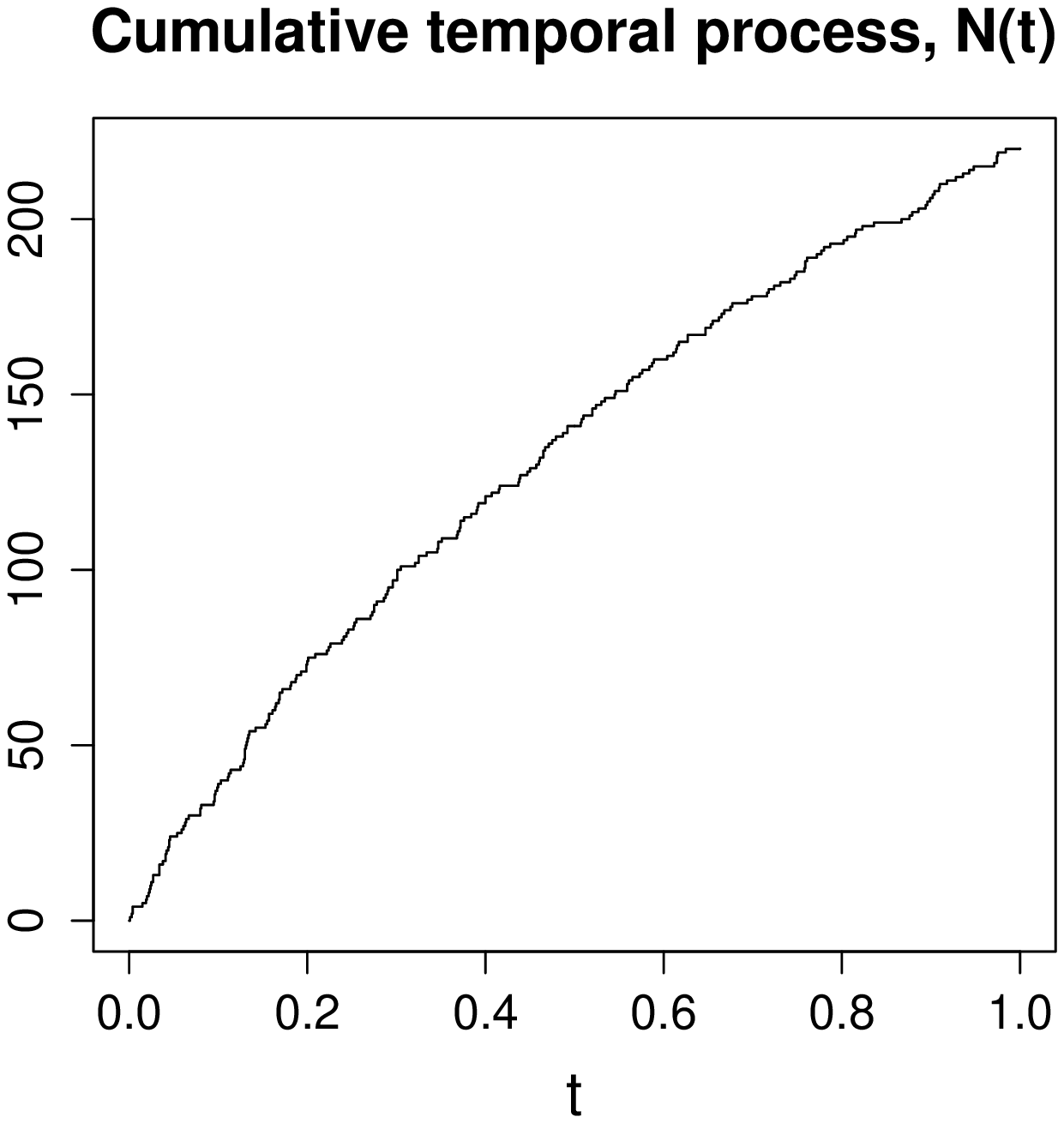}}
	\\
	{\includegraphics[width=0.35\textwidth]{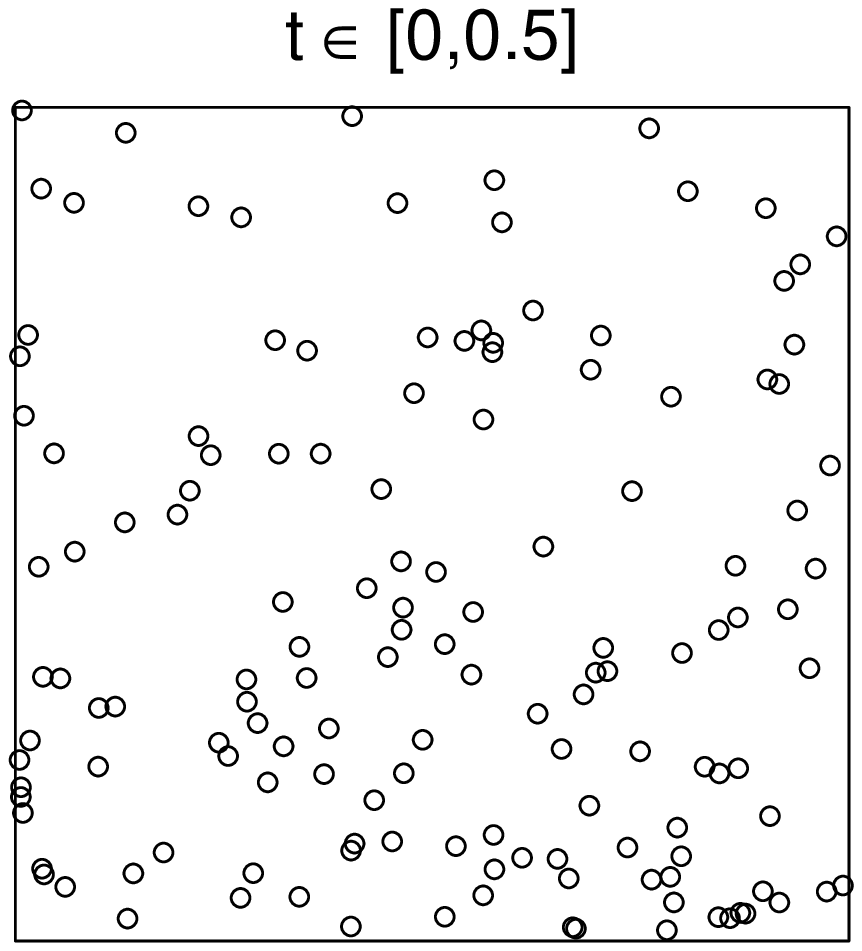}}
	&
     	{\includegraphics[width=0.35\textwidth]{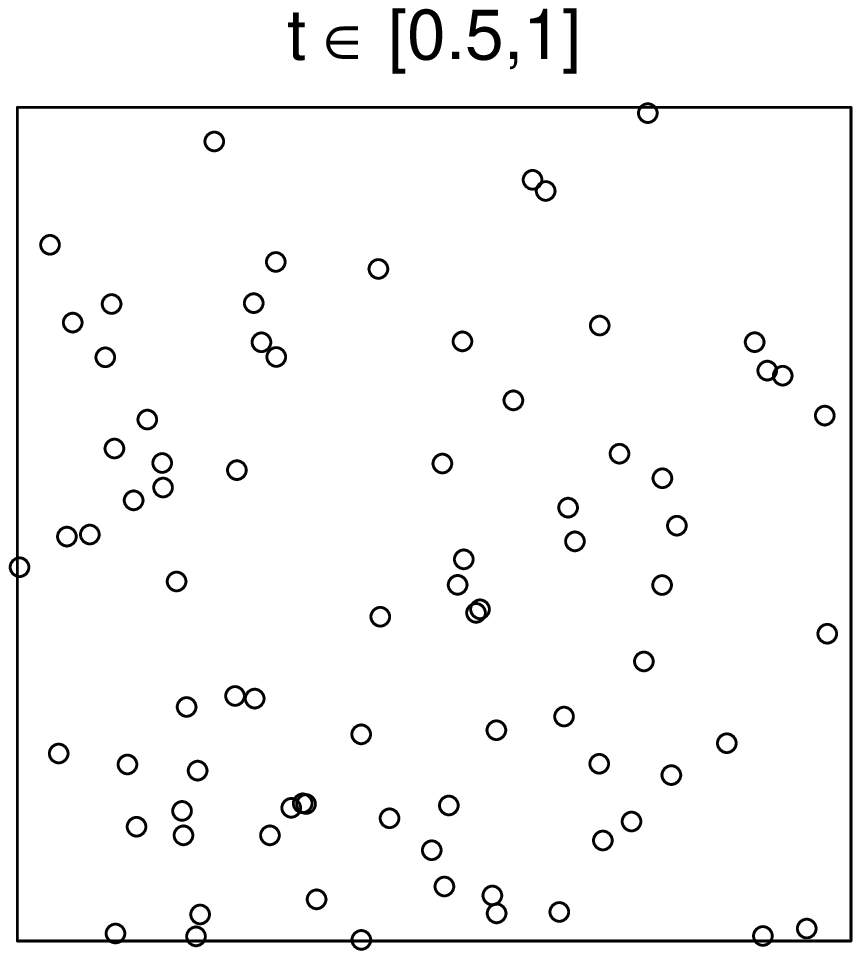}}
\end{tabular}
\caption{A realisation on $W_S\times W_T=[0,1]^2\times[0,1]$ of a Poisson 
process with intensity function $\lambda((x,y),t) = 750\e^{-1.5(y+t)}$, $x,y,t\in\R$.
Upper row: A 3-d plot (left) and a plot of the associated cumulative count 
process (right). Lower row: Spatial projections for the time
intervals $[0,0.5]$ (left) and $[0.5,1]$ (right).
}
\label{RealisationPoi}
\end{center}
\end{figure}

In Figure \ref{PoiFGest}, on the left, we show a collection of 2-d plots of the estimates of 
$G_{\rm inhom}(r,t_0)$ and $F_{\rm inhom}(r,t_0)$ for a 
fixed set of values $t=t_0$. Similarly, in the rightmost plot, 
we display a collection of 2-d plots of the estimates for a fixed set of values $r=r_0$. 
In both cases the dotted lines (-$\blacklozenge$-) represent 
the estimated empty space functions. 
We see that throughout the two estimates are approximately equal and, in addition, there are instances where each of the two is larger than the other. 

\begin{figure}[htbp]
\begin{center}
\begin{tabular}{cc}
     	{\includegraphics[width=0.45\textwidth]{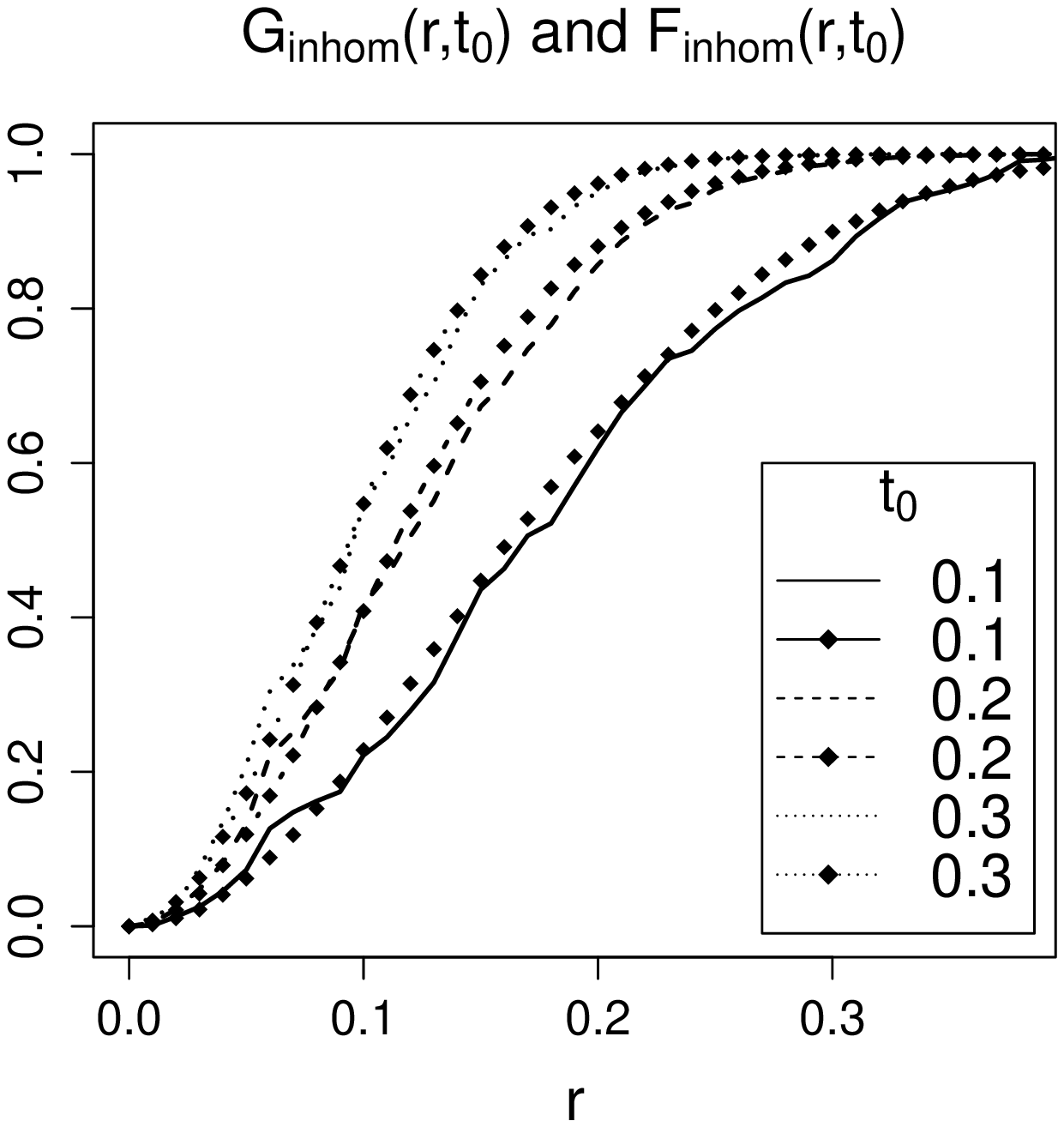}}
	&
     	{\includegraphics[width=0.45\textwidth]{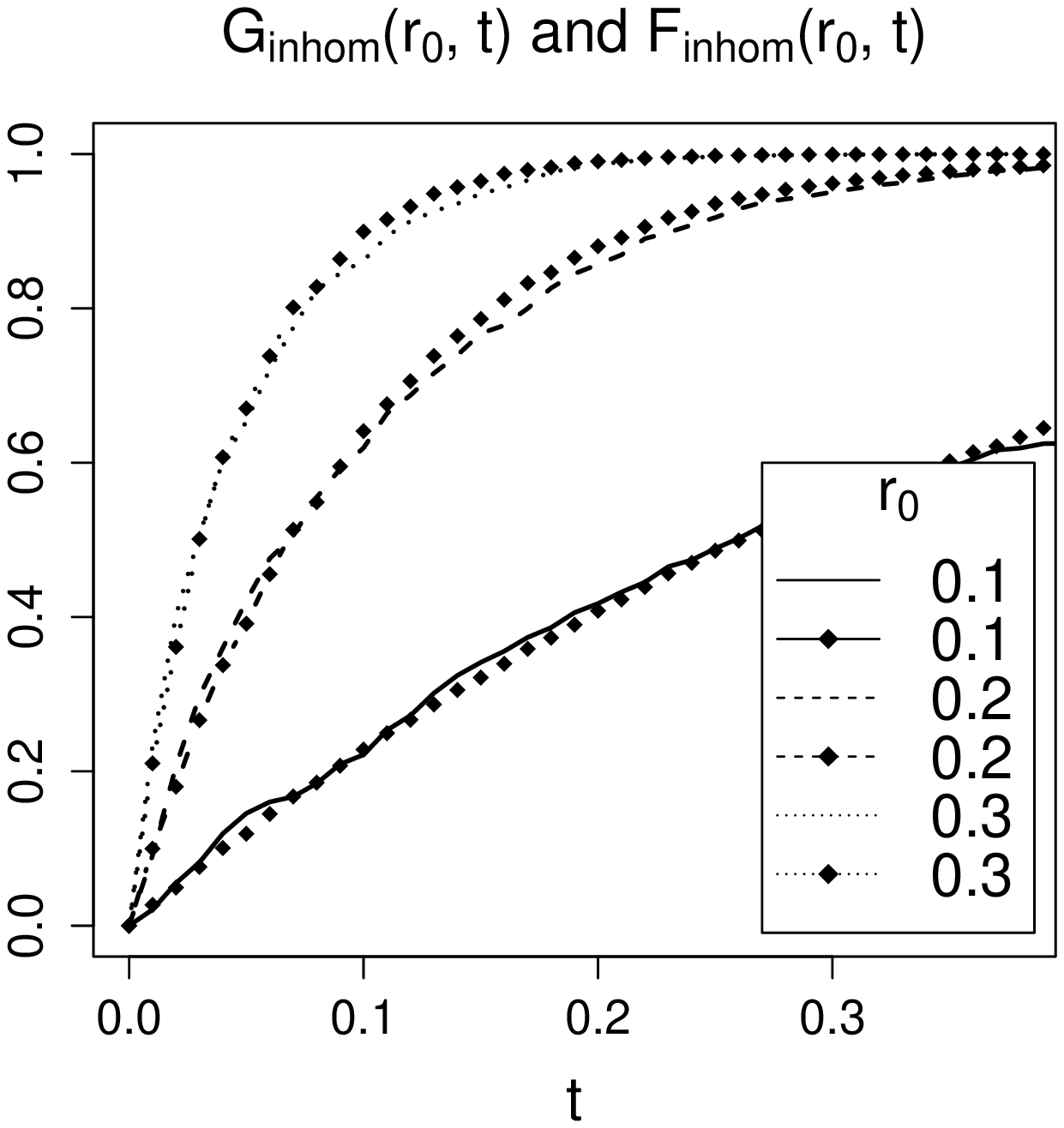}}
\end{tabular}
\caption{
Plots of the estimated nearest neighbour distance distribution
function and empty space function of the Poisson process sample in Figure \ref{RealisationPoi}. Left: As a function of spatial
distance for fixed temporal distances $t_0$. 
Right: As a function of temporal distance for fixed spatial
distances $r_0$. In both plots, the dotted lines 
(-$\blacklozenge$-) represent the empty space function.
}
\label{PoiFGest}
\end{center}
\end{figure}

\subsection{Thinned hard core process}

Let $Y$ be the location dependent thinning of a stationary spatio-temporal
hard core process as described in Section \ref{SectionThinning} with
retention probability $p(x,y,t)=\e^{-1.5(y+t)}$, $(x,y,t)\in\R^2\times\R$,
$\beta=1300$, $R_S=0.05$ and $R_T=0.05$. The associated realisations are shown in the 
top panels of Figure~\ref{RealisationStrauss}. 
The underlying hard core process has 762 points, whereby $\hat{\lambda} = 762$, and the thinned process has 204 points. 
Note that  
the expected number of observed points of $Y$ in $W_S\times W_T$ is
$\lambda\int_{[0,1]^3}p(x,y,t)d(x,y,t)\approx\hat{\lambda}(1-\e^{-1.5})^2/1.5^2$, i.e.\ approximately $200$.
The temporal progress of the process is illustrated in the top-right panel of 
Figure~\ref{RealisationStrauss}, which shows the cumulative number of points 
as a function of time, i.e.\ $N(t)=Y(W_S\times[0,t])$, $t\in[0,1]$. 
The lower row of Figure~\ref{RealisationStrauss} shows two spatial projections. In the lower left panel 
we display $Y\cap(W_S\times[0,0.5])$ and in the lower right panel $Y\cap(W_S\times[0.5,1])$.
Just as in the Poisson case, the two spatial projections illustrate the decay in the intensity function, both in the $y$- and $t$-dimensions.

\begin{figure}[htbp]
\begin{center}
\begin{tabular}{ccc}
     	{\includegraphics[width=0.35\textwidth]{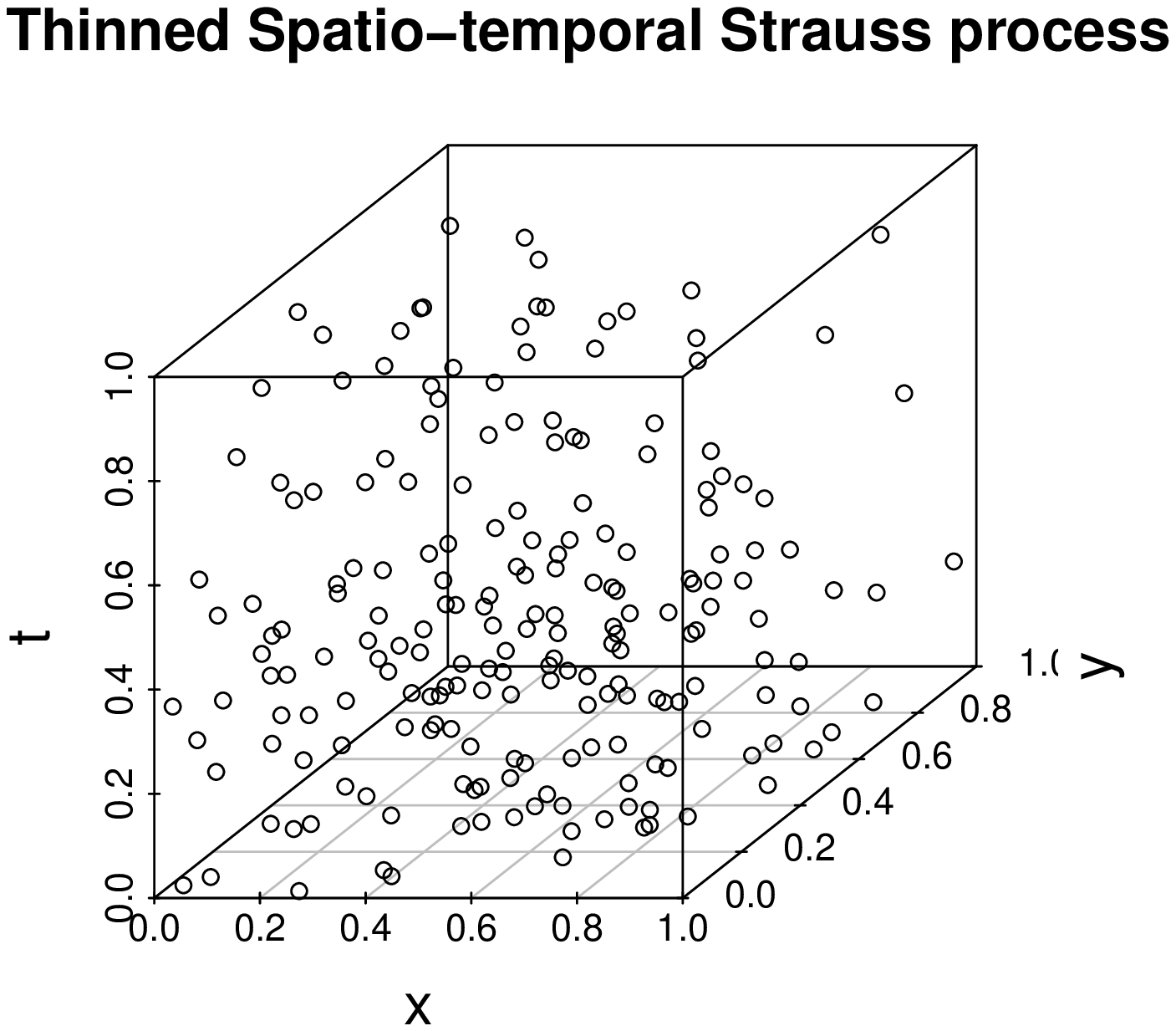}}
	&
     	{\includegraphics[width=0.35\textwidth]{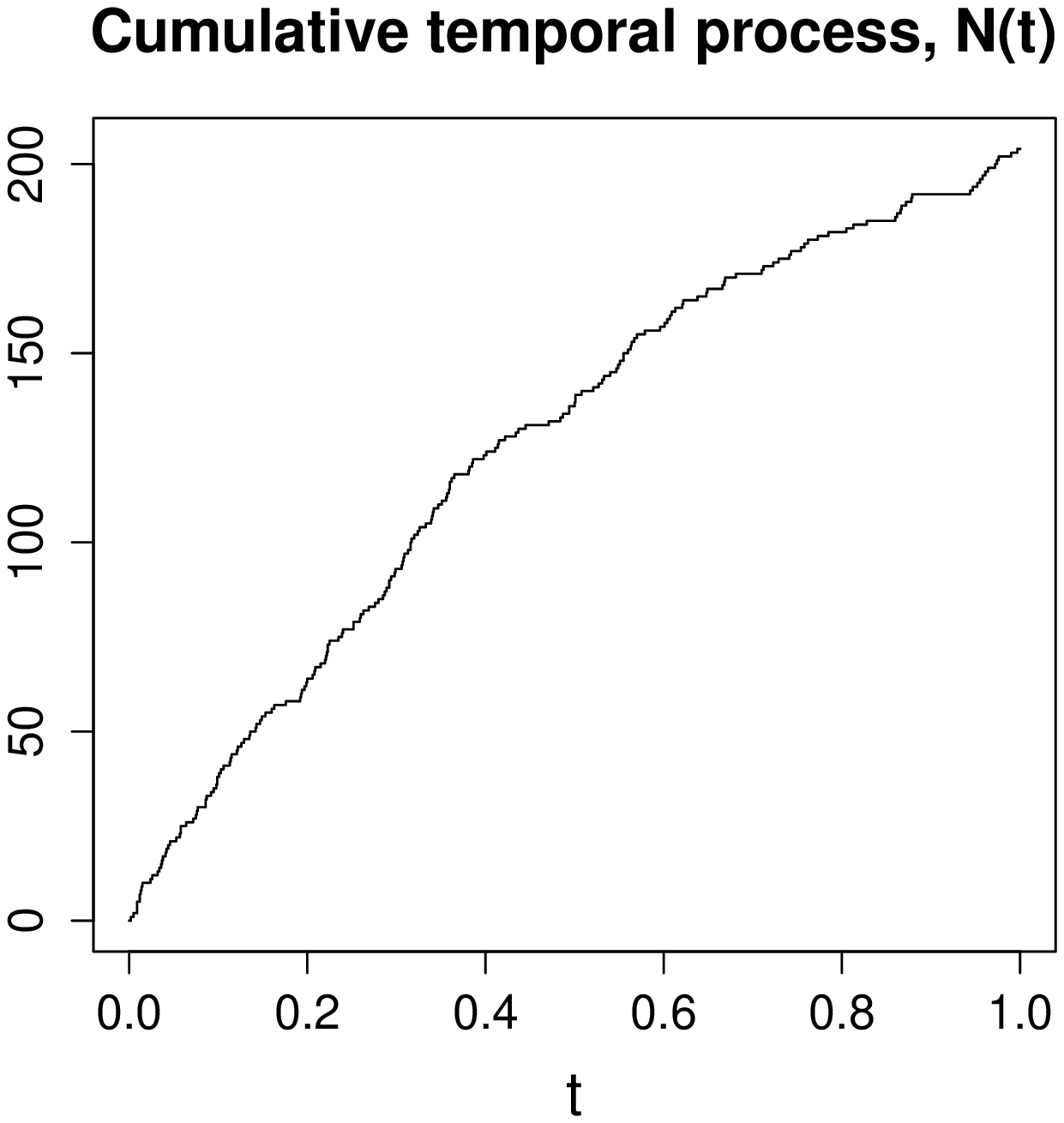}}
	\\
	{\includegraphics[width=0.35\textwidth]{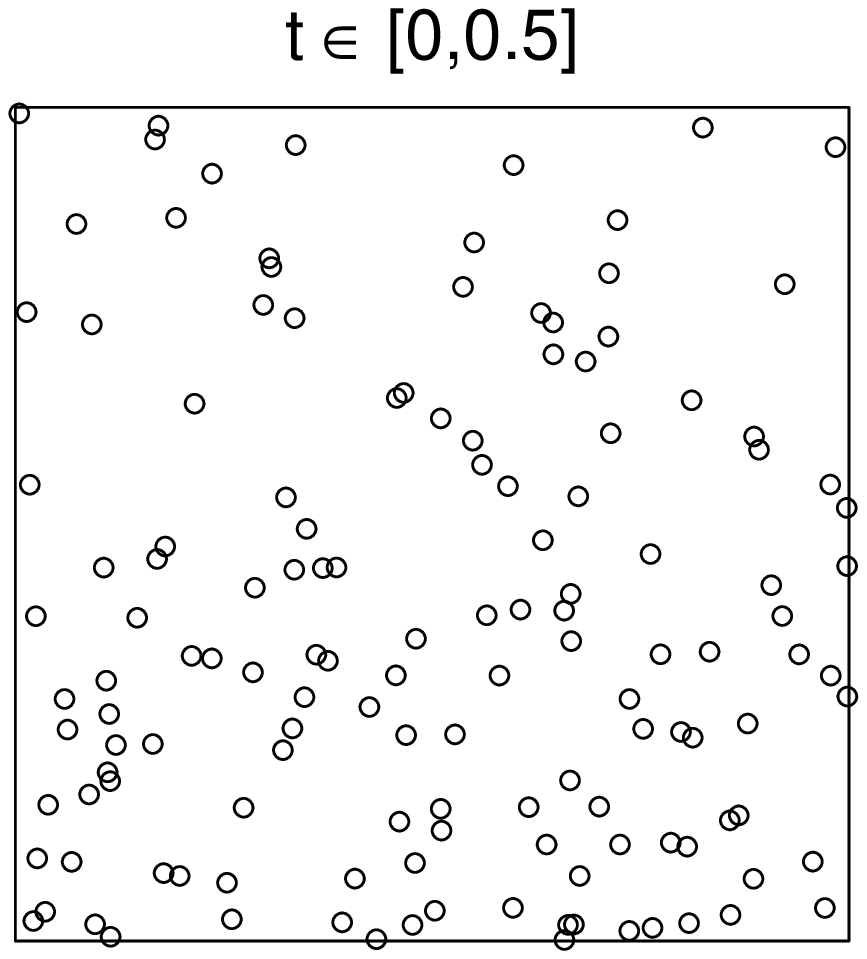}}
	&
     	{\includegraphics[width=0.35\textwidth]{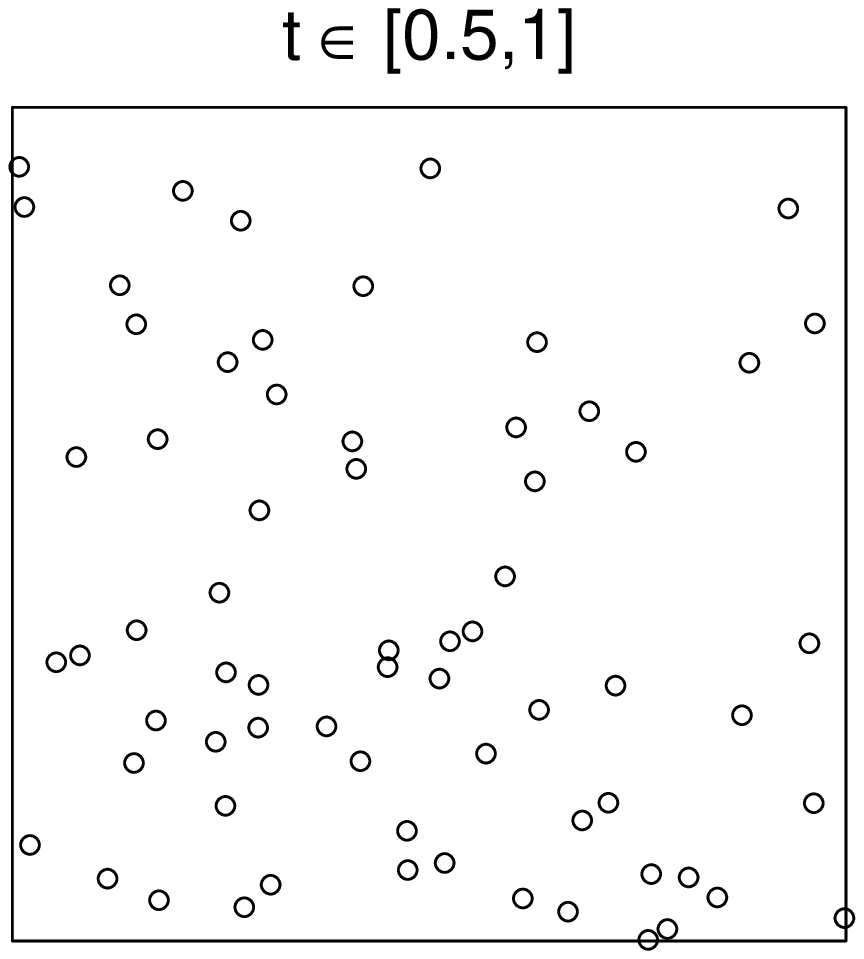}}
\end{tabular}
\caption{A realisation on $W_S\times W_T=[0,1]^2\times[0,1]$ of a 
location dependent thinning, with retention probability $p(x,y,t)=\e^{-1.5(y+t)}$, $(x,y,t)\in\R^2\times\R$, of a spatio-temporal hard core process with 
$\beta=1300$, $R_S=0.05$ and $R_T=0.05$. 
Upper row: A 3-d plot (left) and a plot of the associated cumulative count 
process (right). Lower row: Spatial projections for the time
intervals $[0,0.5]$ (left) and $[0.5,1]$ (right).
}
\label{RealisationStrauss}
\end{center}
\end{figure}

In order to obtain a picture of the interaction structure, in Figure \ref{StraussFGest} we have plotted the estimates of $G_{\rm inhom}(r,t)$ and $F_{\rm inhom}(r,t)$ for large $r,t$ ranges. 
Again, the dotted lines (-$\blacklozenge$-) represent the estimated empty space functions. 
As expected we see that the estimate of $G_{\rm inhom}(r,t)$ is the smaller of the two. 
For small values of $r_0$ and $t_0$, the hardcore distances $R_S$ and $R_T$ are clearly identified 
in the lower row of Figure \ref{StraussFGest}. 
For large values of $r_0$ and $t_0$ this is no longer the case due to accumulation points; see the top row of Figure \ref{StraussFGest}. For instance, there are many points violating the spatial hard core constraint, which still respect the temporal temporal hard core constraint. 

\begin{figure}[htbp]
\begin{center}
\begin{tabular}{cc}
     	{\includegraphics[width=0.45\textwidth]{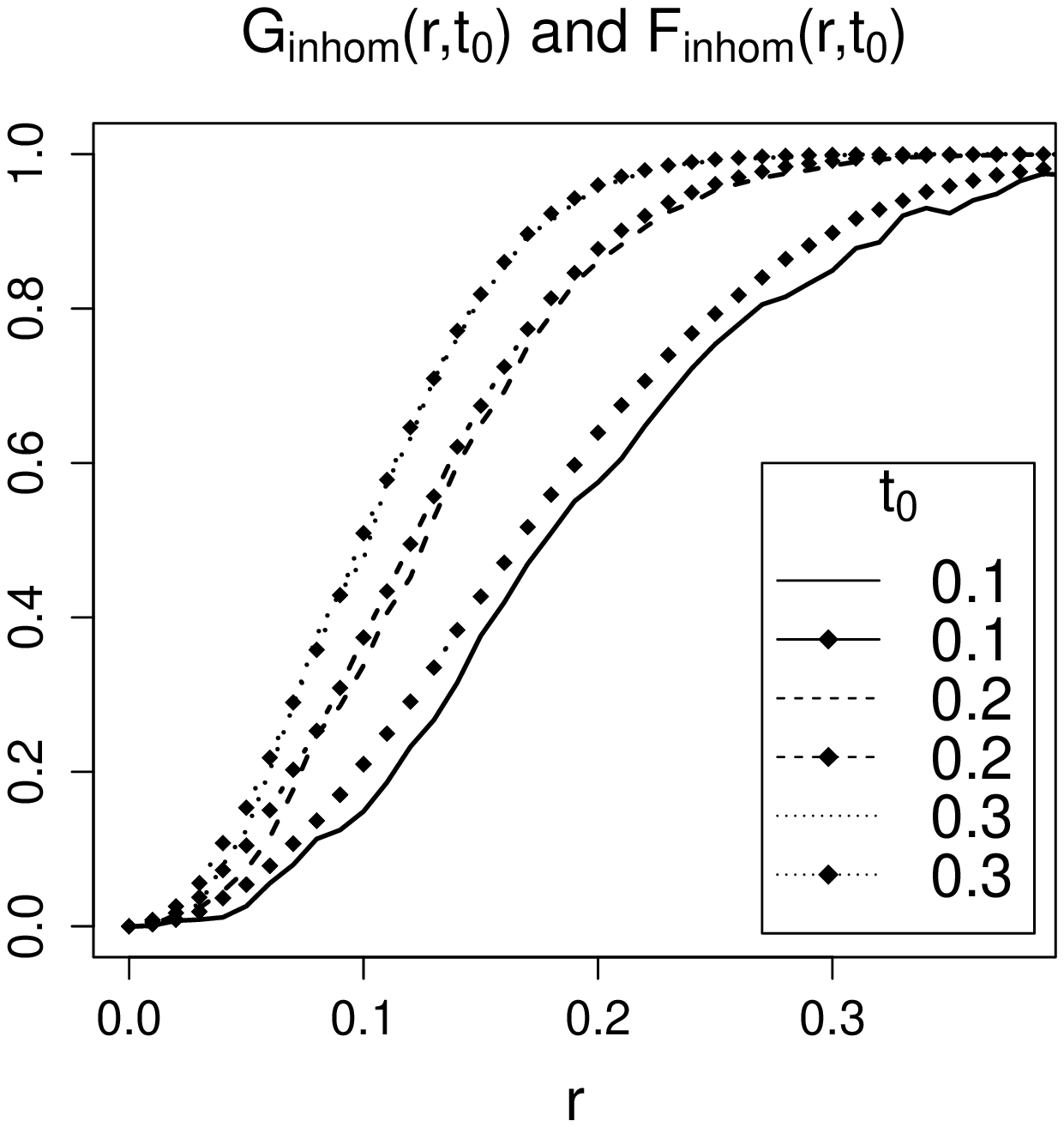}}
	&
     	{\includegraphics[width=0.45\textwidth]{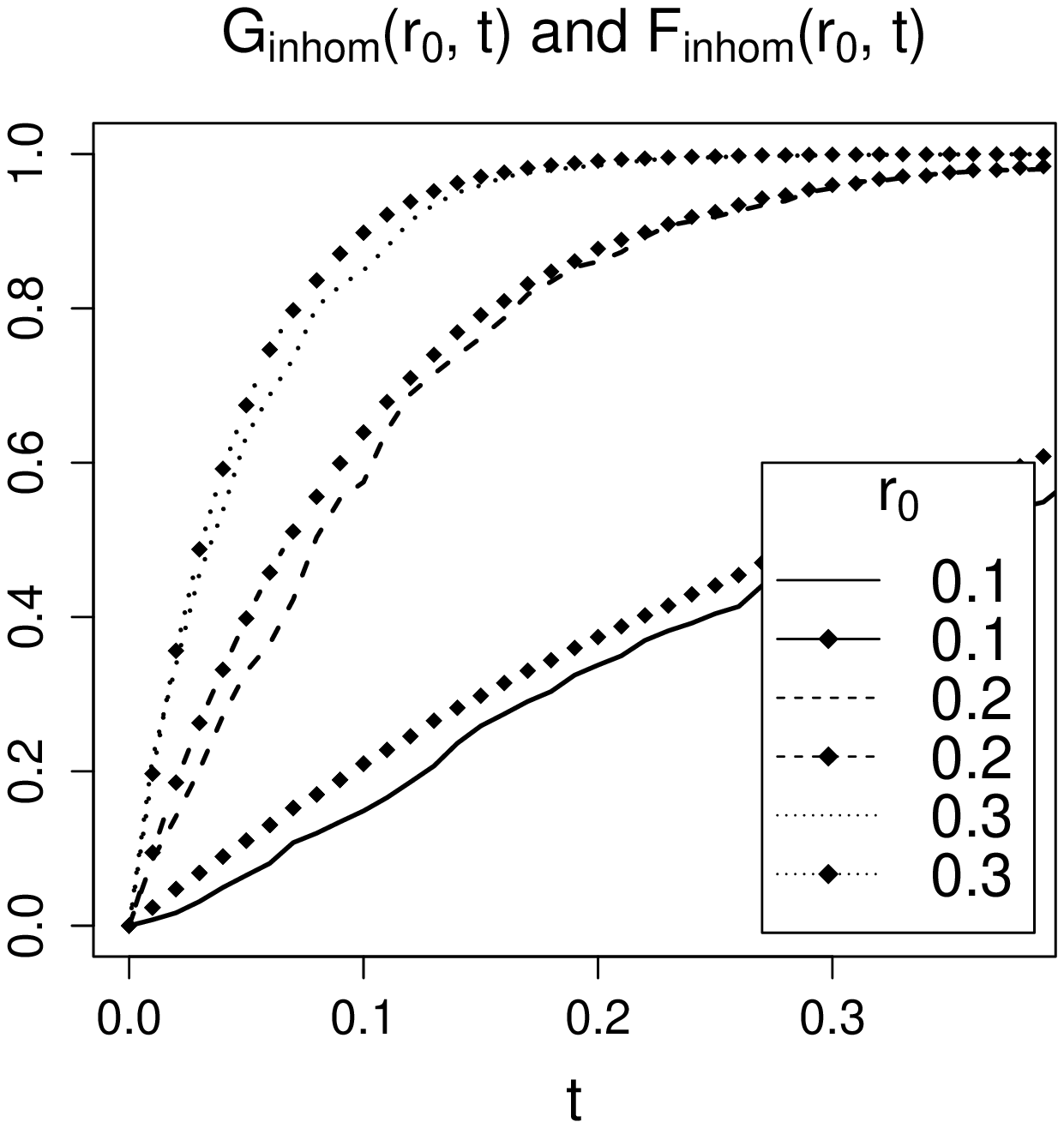}}
	\\
	{\includegraphics[width=0.45\textwidth]{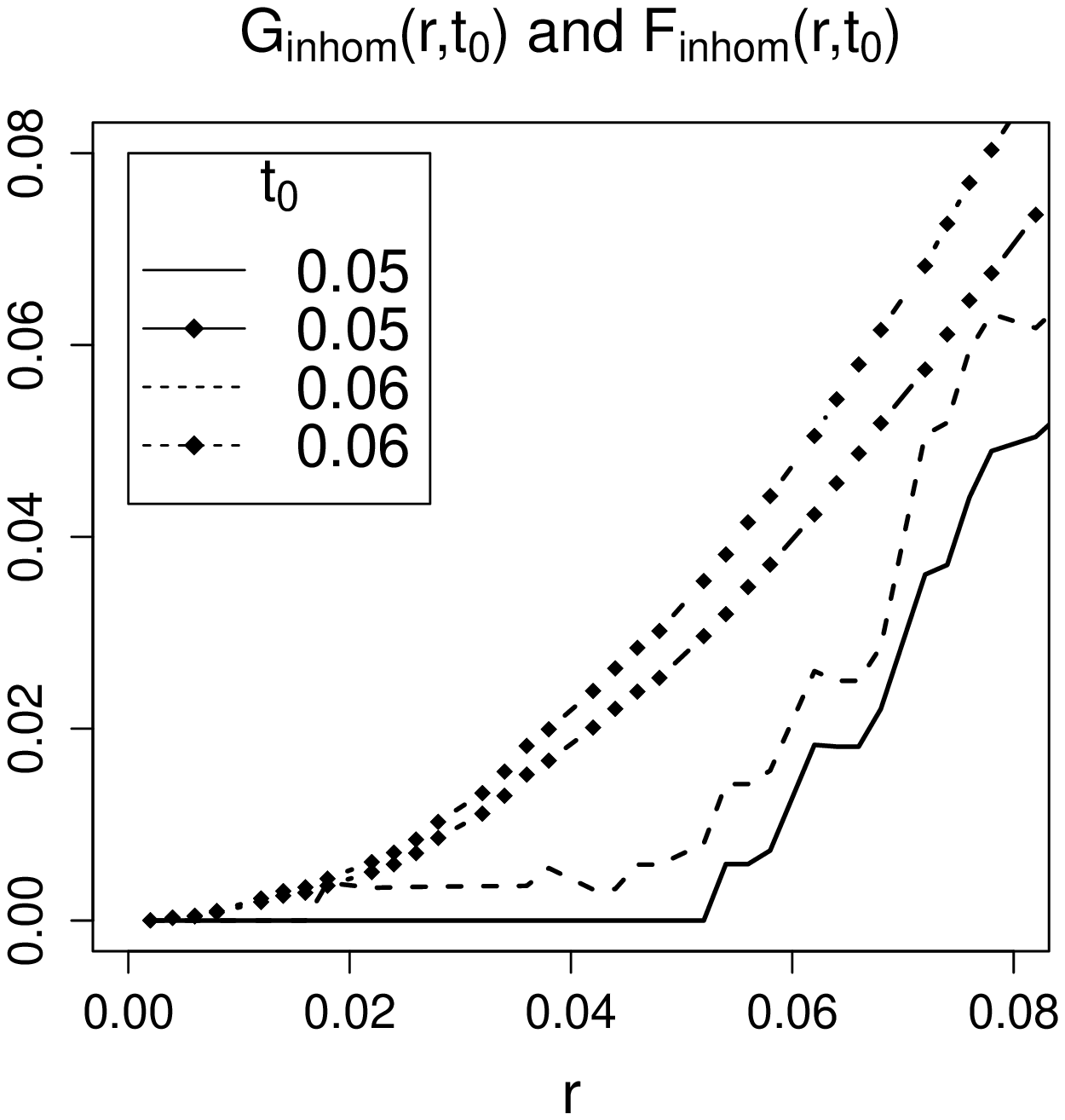}}
	&
     	{\includegraphics[width=0.45\textwidth]{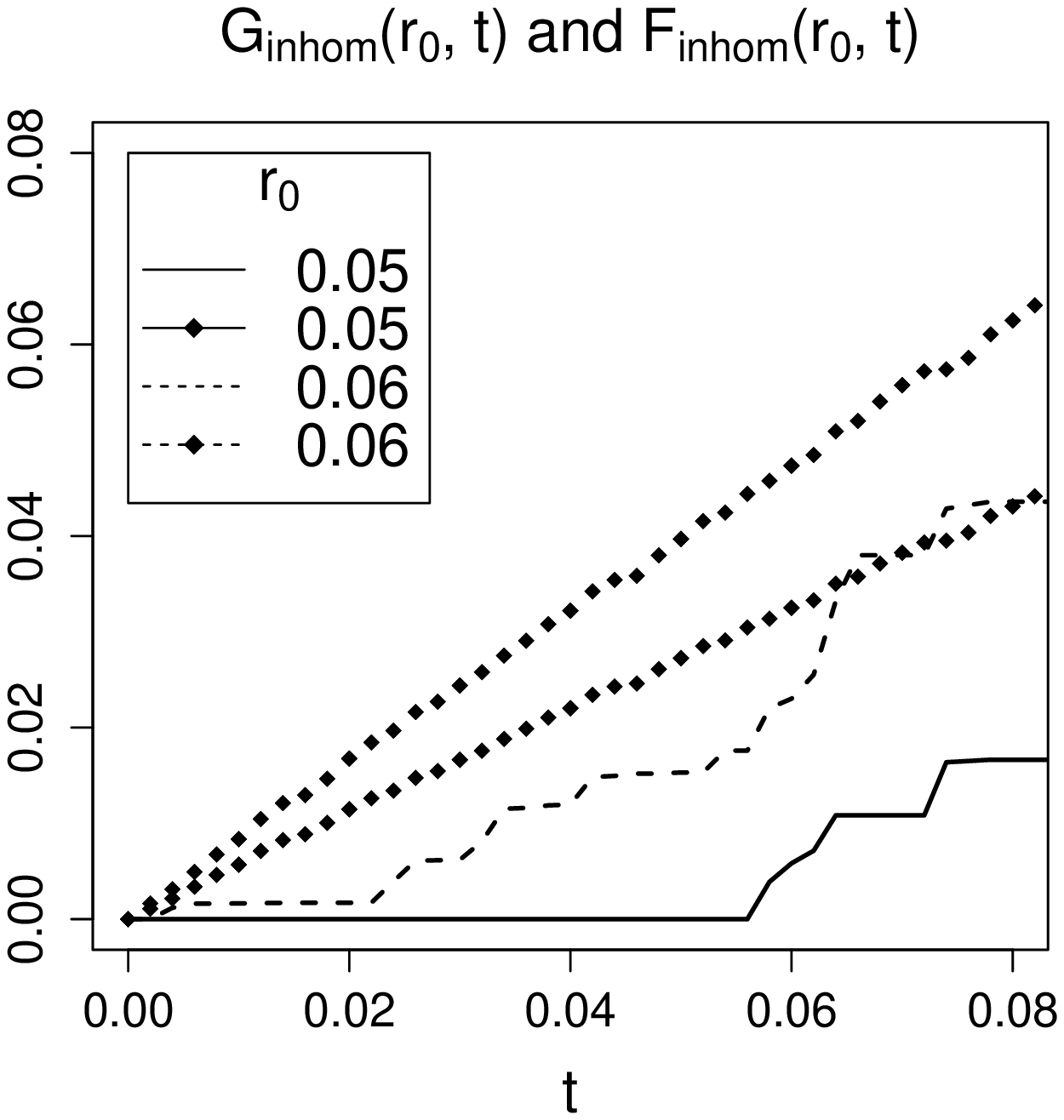}}
\end{tabular}
\caption{
Plots of the estimated nearest neighbour distance distribution
function and the empty space function of the thinned hard core process in Figure \ref{RealisationStrauss}. Left: As a function of spatial
distance for fixed temporal distances $t_0$. 
Right: As a function of temporal distance for fixed spatial
distances $r_0$. In both cases the dotted lines 
(-$\blacklozenge$-) represent the empty space function estimates.
}
\label{StraussFGest}
\end{center}
\end{figure}

\subsection{Log-Gaussian Cox process}

Recall the log-Gaussian Cox processes discussed in Section \ref{SectionCox}.
We consider the separable covariance 
function (see Appendix)
\[
C((x_1,y_1,t_1),(x_2,y_2,t_2)) = C_S((x_1,y_1)-(x_2,y_2)) C_T(t_1-t_2)
\]
for the driving Gaussian random field of the STPP $Y$. 
Specifically, we let the component covariance functions be $C_S(x,y) = \sigma_S^2\exp\{- \|(x,y)\|^2\}$ 
(Gaussian) and $C_T(t) = \sigma_T^2\exp\{-|t|\}$ (exponential), $x,y,t\in\R$, where $\sigma_S^2=\sigma_T^2=1/4$, so that 
$\sigma^2 = C_S(0)C_T(0) = 1/16$ and we let the mean function be given by $\mu(x,y,t)= \log(750) - 1.5(y + t) - \sigma^2/2$. 
Figure~\ref{IntensityCox} shows projections of a realisation of the 
driving random intensity function at time $t=0.5$ (left) and spatial
coordinate $x=0.5$ (right). 
Note the gradient in the vertical direction in the left plot and the diagonal trend in the right one. 

\begin{figure}[bhtp]
\begin{center}
\begin{tabular}{cc}
     	{\includegraphics[width=0.45\textwidth]{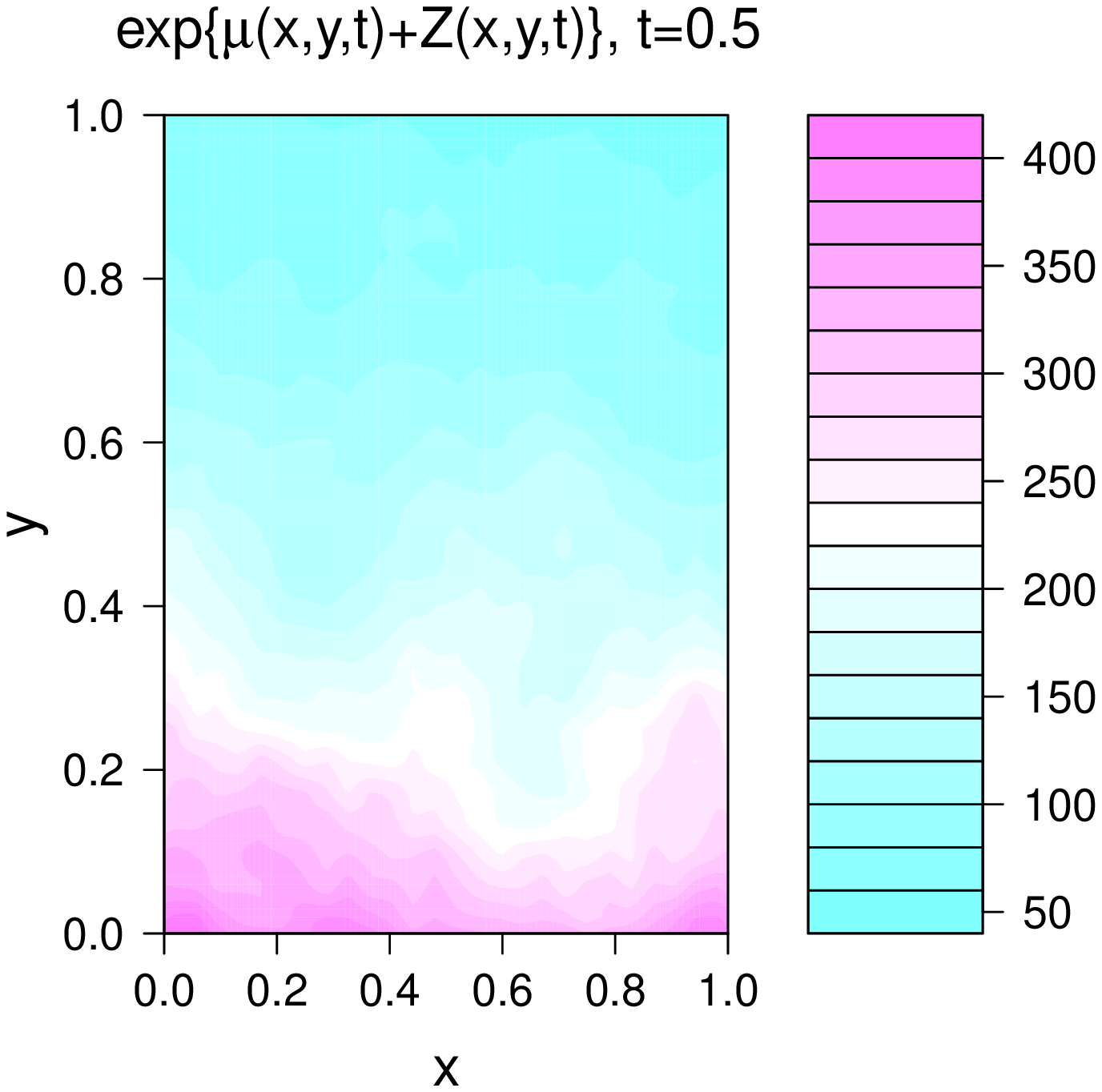}}
	&
     	{\includegraphics[width=0.45\textwidth]{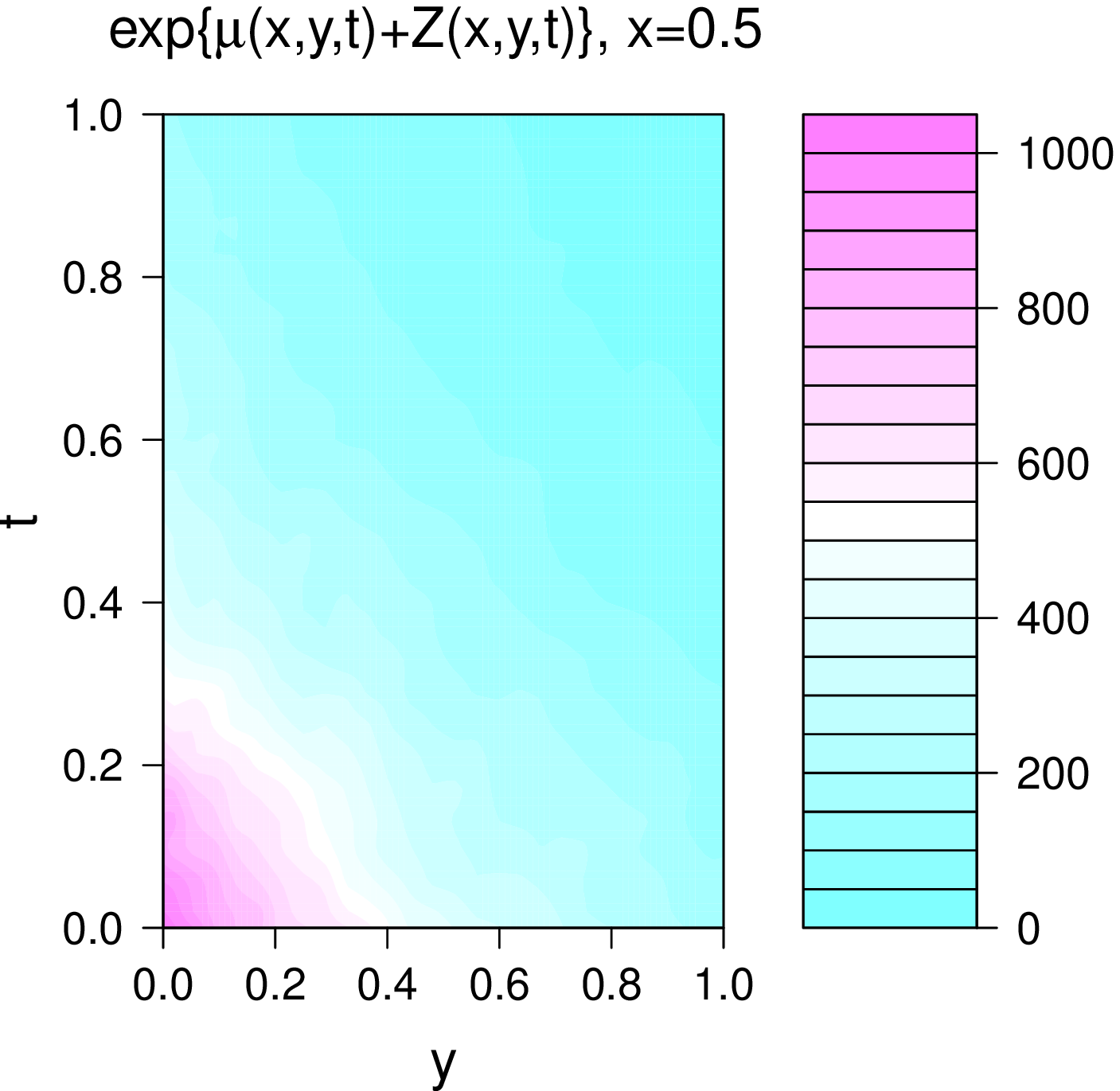}}
\end{tabular}
\caption{Projections of a realisation of the driving random intensity field at time $t=0.5$ 
(left) and at spatial coordinate $x=0.5$ (right). 
}
\label{IntensityCox}
\end{center}
\end{figure}

By expression (\ref{IntesityLGCP}) we obtain $\lambda(x,y,t)=750\e^{-1.5(y+t)}$ and consequently 
$\bar{\lambda} = 750\e^{-3}\approx37.34$. Hereby the expected number of observed points of $Y$ in $W_S\times W_T$ is
$750(1 - e^{-1.5})^2/1.5^2$, i.e.\ approximately $200$. 
A realisation of $Y$ with 219 points is shown in the 
top-left panel of Figure~\ref{RealisationCox} and 
the cumulative number of points as a function of time, 
i.e.\ $N(t)=Y(W_S\times[0,t])$, $t\in[0,1]$, is illustrated in the top-right 
panel. 
The lower row of Figure~\ref{RealisationCox} shows two spatial projections. In the left 
panel, we display $Y\cap(W_S\times[0,0.5])$ and in the right panel 
$Y\cap(W_S\times[0.5,1])$. 
Also here the decay in the intensity function in the $y$- and $t$-dimensions is visible. In addition, by comparing the lower rows of Figures \ref{RealisationPoi} and \ref{RealisationCox}, the present clustering effects become evident. 

\begin{figure}[htbp]
\begin{center}
\begin{tabular}{cc}
     	{\includegraphics[width=0.35\textwidth]{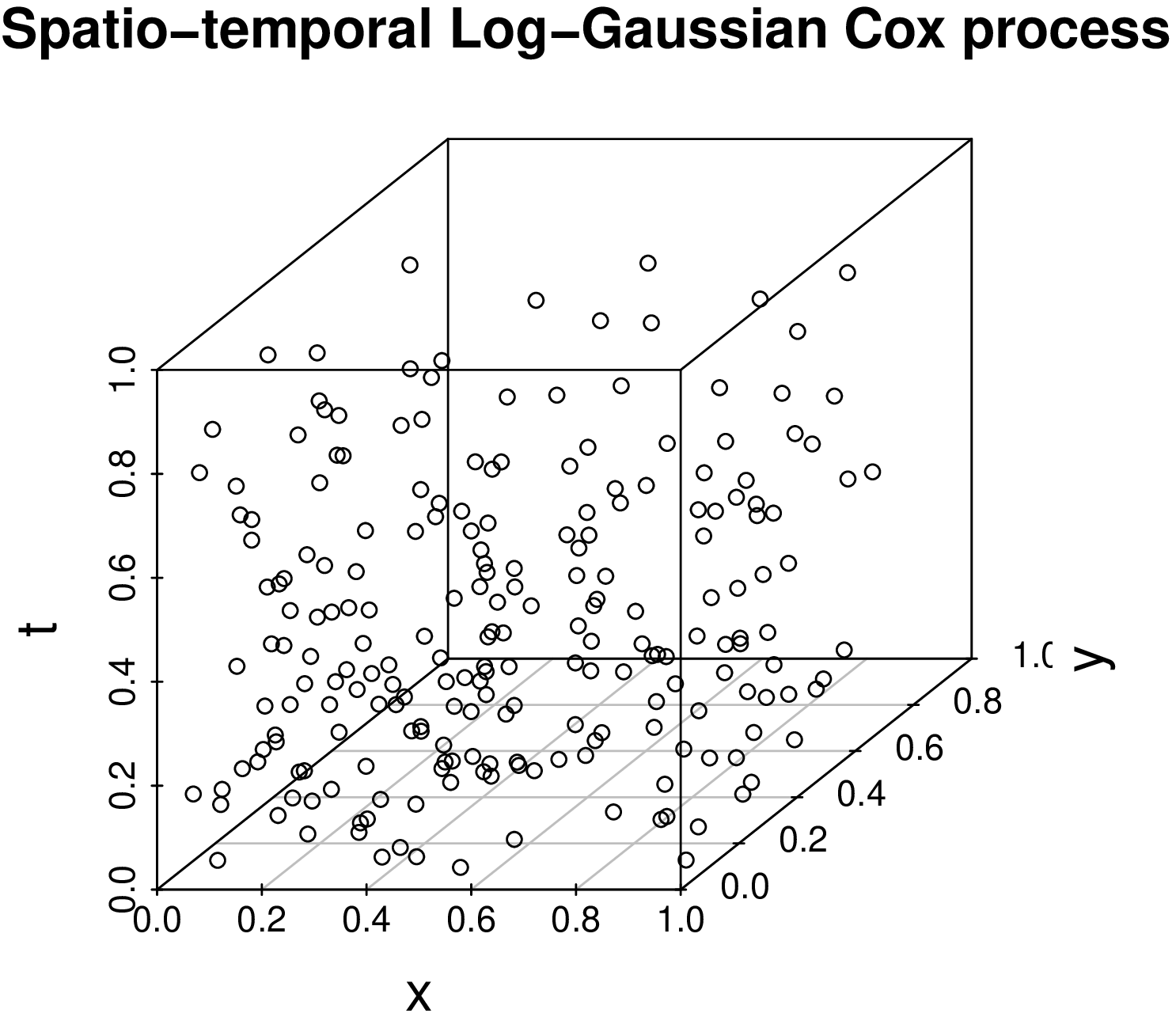}}
	&
    	{\includegraphics[width=0.35\textwidth]{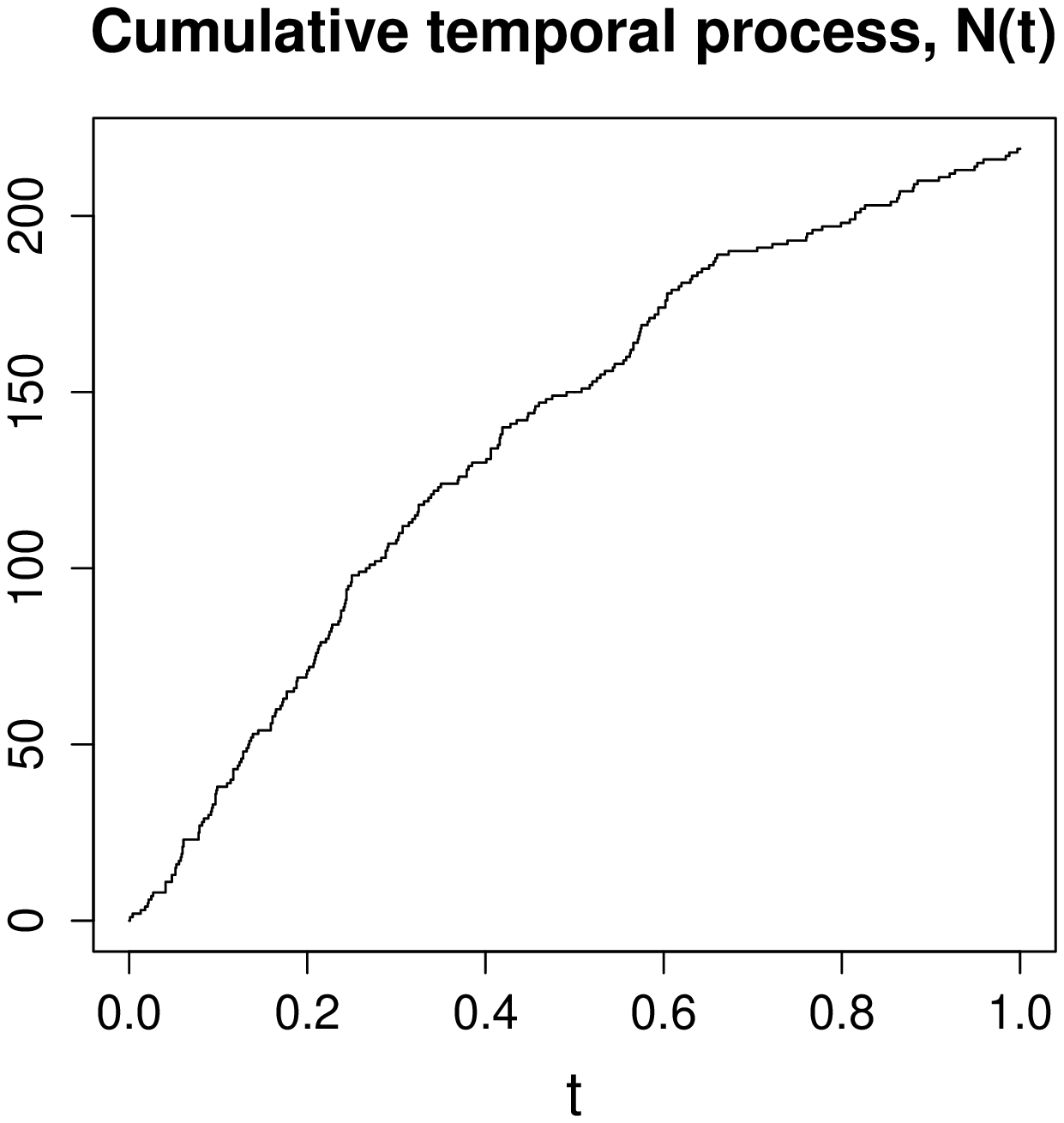}}
	\\
     	{\includegraphics[width=0.35\textwidth]{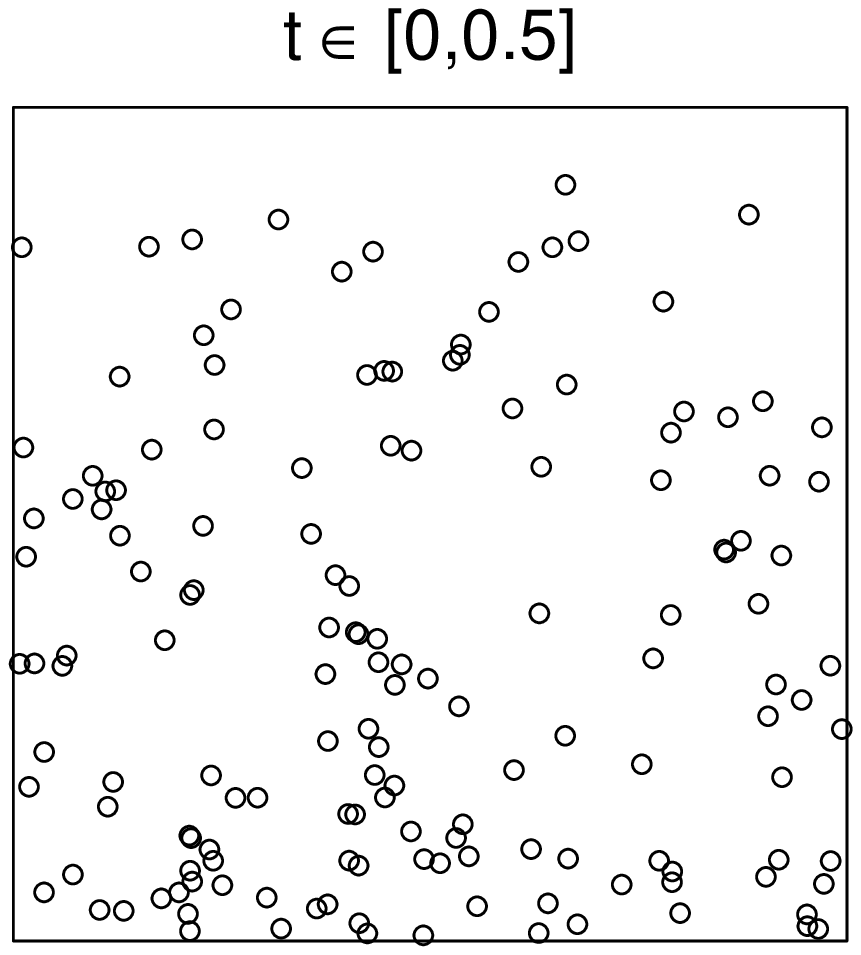}}
	&
	{\includegraphics[width=0.35\textwidth]{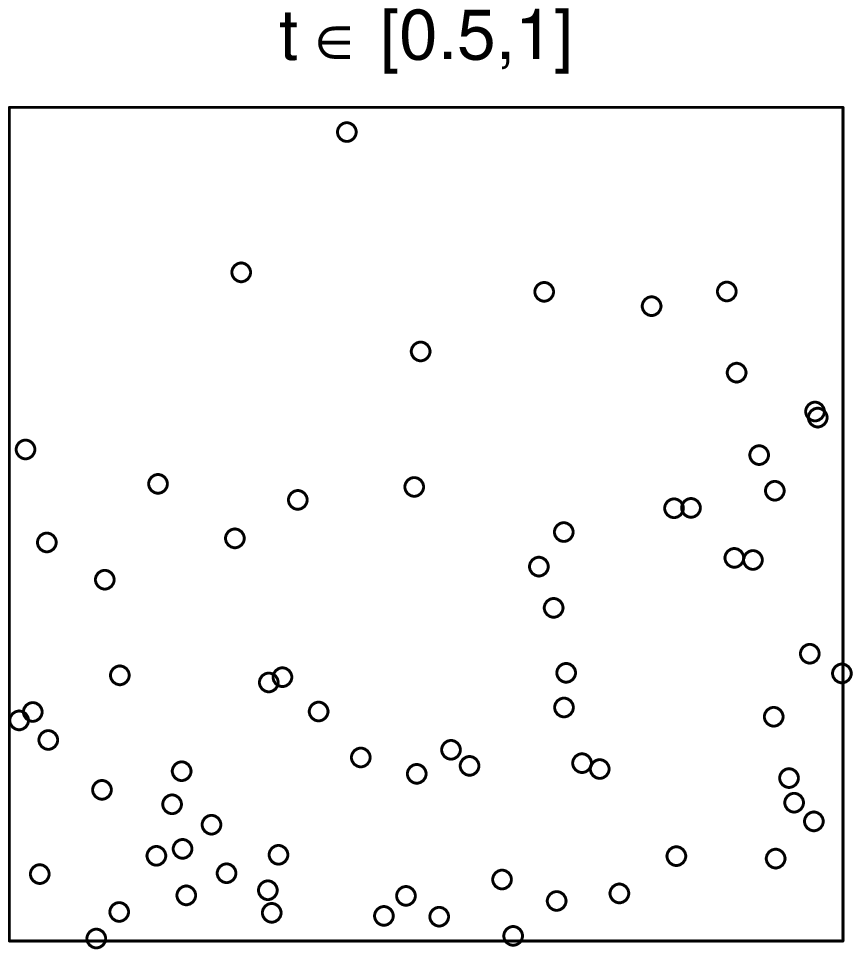}}
\end{tabular}
\caption{A realisation on $W_S\times W_T=[0,1]^2\times[0,1]$ of a 
log-Gaussian Cox process with $\mu(x,y,t)= \log(750) - 1.5(y + t) - \frac{(1/4)^2}{2}$ 
and separable covariance function 
$C_S(x,y)C_T(t) = \frac{1}{4}\e^{-\|(x,y)\|^2} \frac{1}{4}\e^{-|t|}$, 
$(x,y,t)\in\R^2\times\R$. 
Upper row: A 3-d plot (left) and a plot of the associated cumulative count 
process (right). Lower row: Spatial projections for the time 
intervals $[0,0.5]$ (left) and $[0.5,1]$ (right).
}
\label{RealisationCox}
\end{center}
\end{figure}

In Figure~\ref{CoxFGest} we have plotted the estimates of $G_{\rm inhom}(r,t)$ and $F_{\rm inhom}(r,t)$. As before, the dotted lines (-$\blacklozenge$-) represent the estimates of $F_{\rm inhom}(r,t)$. Due to the structure of $C$, when e.g.\ $t$ is small we find clear signs of clustering whereas for larger $t$, where $C(x,y,t)\approx0$, as expected we have Poisson like behaviour. 

\begin{figure}[htbp]
\begin{center}
\begin{tabular}{cc}
     	{\includegraphics[width=0.45\textwidth]{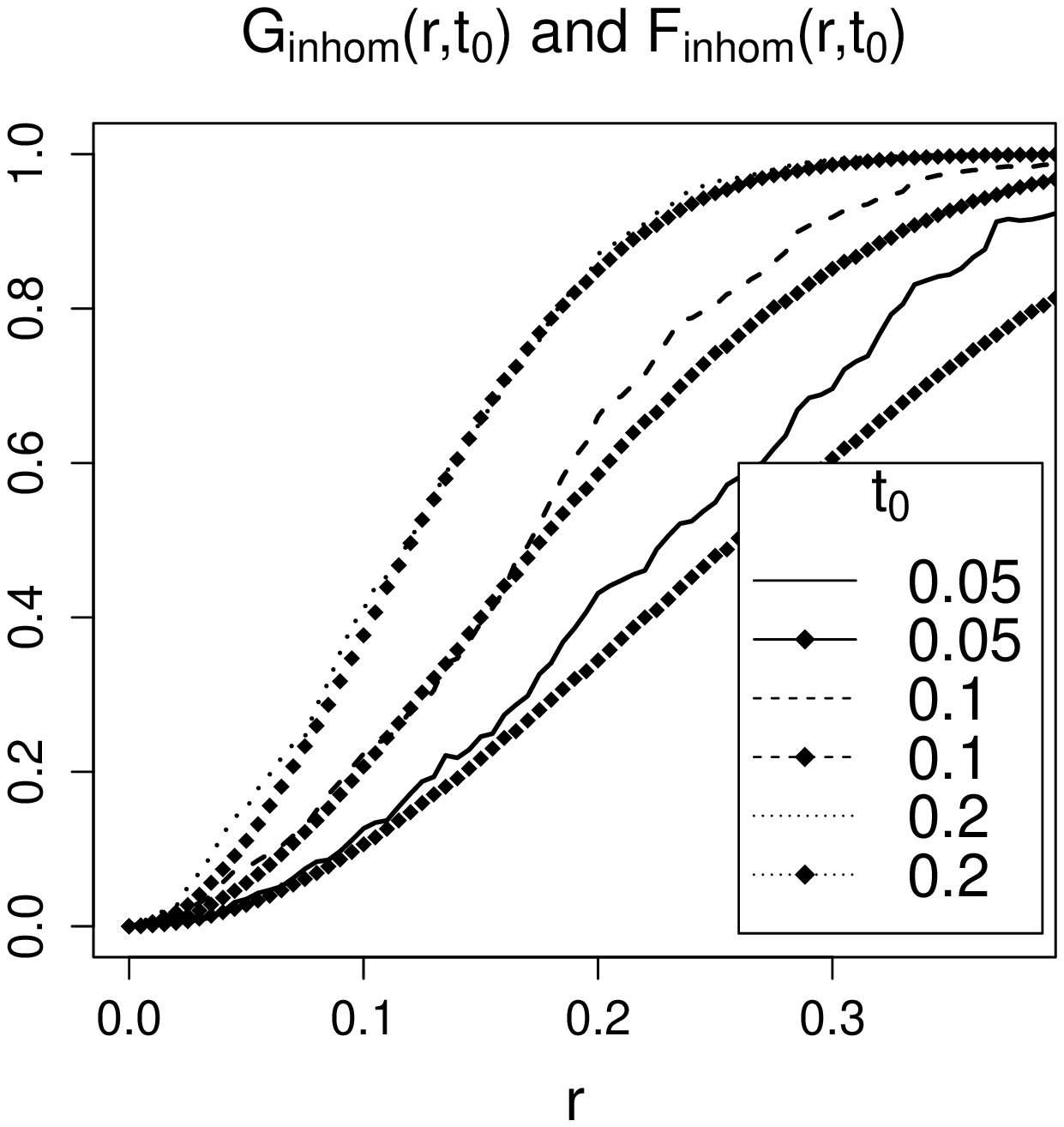}}
	&
     	{\includegraphics[width=0.45\textwidth]{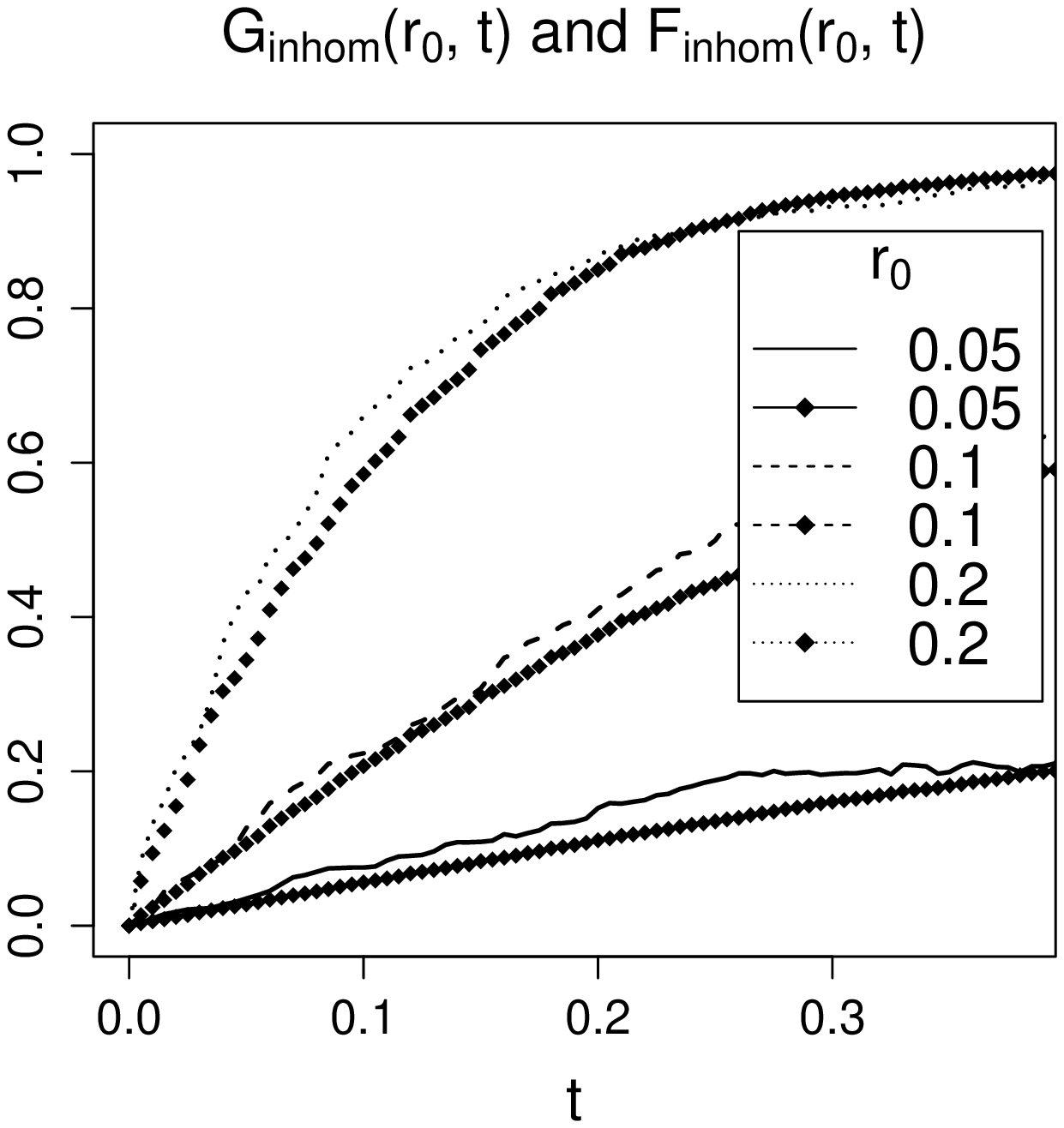}}
\end{tabular}
\caption{
Plots of the estimated nearest neighbour distance distribution
function and empty space function of the log-Gaussian Cox process in Figure \ref{RealisationCox}. Left: As a function of spatial
distance for fixed temporal distances $t_0$. 
Right: As a function of temporal distance for fixed spatial
distances $r_0$. In both plots, the dotted lines 
(-$\blacklozenge$-) represent the empty space function.
}
\label{CoxFGest}
\end{center}
\end{figure}

\section*{Acknowledgements}
The authors would like to thank Guido Legemaate, Jesper M{\o}ller and 
Alfred Stein for useful input and discussions, Martin Schlather 
for the providing of and the help with an updated version of his R 
package {\tt{RandomFields}}. This research was supported by the 
Netherlands Organisation for Scientific Research NWO (613.000.809).

%\pagebreak

\newpage

\section*{Appendix}

\subsection*{Sample path continuity of Gaussian random fields}

Let $Z$ be a stationary Gaussian random field with mean zero. We wish to impose 
conditions, which ensure that $Z$ a.s.\ has continuous sample paths. 
If $Z$ would be defined on the Euclidean space 
$(\R^{d}\times\R,\|\cdot\|_{\R^{d+1}},d_{\R^{d+1}}(\cdot,\cdot))$, 
with $C(x,y) = \sigma^2 r( x-y )$, $\sigma^2 > 0$,
then \cite[Section 5.6.1]{MollerWaagepetersen} lists sufficient conditions on the 
correlation function $r(\cdot)$ as follows.
There exist $\epsilon,\delta>0$ such that either, 
\begin{enumerate}
\item $1-r(x,t)<\delta/(-\log(\|(x,t)\|_{\R^{d+1}}))^{1+\epsilon}$, or 
\item $1-r(x,t)<\delta\|(x,t)\|_{\R^{d+1}}^{\epsilon}$,
\end{enumerate}
for all lag pairs $(x,t)\in\R^{d}\times\R$ in an open Euclidean ball
centred at 0. Note that the former condition, which in fact is the 
condition given in \cite[Theorem 3.4.1]{Adler}, is less restrictive than 
the latter one but often harder to check.
However, the underlying space here is 
$(\R^{d}\times\R,\|\cdot\|_{\infty},d(\cdot,\cdot))$. 
Hence, one explicit way of obtaining equivalent conditions for 
$r(\cdot)$ would be to consider the log-entropy related results of 
\cite[Section 1]{AdlerTaylor} for Gaussian random fields on general 
compact spaces and exploit that $\R^{d}\times\R$ is $\sigma$-compact. 
A more direct and natural approach is to note that, through the
topological equivalence of $d_{\R^{d+1}}(\cdot,\cdot)$ and $d(\cdot,\cdot)$, 
we have the necessary condition that, for any $(x,t)\in\R^{d}\times\R$, 
there exist constants $\alpha_1,\alpha_2>0$ such that 
$\alpha_1 d((x,t),(y,s)) \leq d_{\R^{d+1}}((x,t),(y,s)) 
\leq \alpha_2 d((x,t),(y,s))$ for all $(y,s)\in\R^{d}\times\R$. 
Hereby, in particular, there are $\alpha_1,\alpha_2>0$ such that 
$\alpha_1 \|(x,t)\|_{\infty} \leq \|(x,t)\|_{\R^{d+1}} \leq 
\alpha_2 \|(x,t)\|_{\infty}$ for all $(x,t)\in\R^{d}\times\R$ and we see 
that the conditions above are retained in 
$(\R^{d}\times\R,\|\cdot\|_{\infty},d(\cdot,\cdot))$, with adjusted 
constants $\delta,\epsilon>0$. 
Note that the a.s.\ sample path continuity implies a.s.\ sample 
path boundedness on compact sets \cite[Section 1]{AdlerTaylor}. 

\subsection*{Covariance models}

One particular family of correlation functions $r(\cdot)$ for which the 
a.s.\ continuity conditions above are satisfied is the {\em power 
exponential family\/} (see \cite[Section 5.6.1]{MollerWaagepetersen}),
$$
r(x,t) = \exp(-\|(x,t)\|_{\infty}^{\delta}),
\quad 0\leq\delta\leq2,
\quad (x,t)\in\R^{d}\times\R. 
$$
The special case $\delta=1$ generates the exponential correlation, 
$\delta=2$ gives rise to the Gaussian correlation function. 
Note that the isotropy of $r(\cdot)$ implies isotropy of the LGCP $Y$ 
since its distribution is completely specified by $C(\cdot)$.

A common practical assumption when modelling spatio-temporal Gaussian random 
fields is to assume separability (see e.g.\ \cite[Chapter~23]{Handbook}). 
Consider the covariance functions 
$C_S(x,y) = \sigma_S^2 r_S(x-y)$ 
and 
$C_T(t,s) = \sigma_T^2 r_T(t-s)$, $x,y\in\R^d$, $t,s\in\R$, 
where $\sigma^2_S,\sigma^2_T>0$.
We may now consider two types of separability:
\begin{enumerate}
\item Multiplicative separability: 
\[
C((x,t),(y,s))=C_S(x,y)C_T(t,s)=\sigma_S^2 \sigma_T^2 r_S(x-y) r_T(t-s).
\]
\item Additive separability: 
\[
C((x,t),(y,s))=C_S(x,y) + C_T(t,s)=\sigma_S^2 r_S(x-y) + \sigma_T^2 r_T(t-s). 
\]
\end{enumerate}
The latter is a consequence of assuming that
 $Z(x,t)=Z_S(x) + Z_T(t)$, $(x,t)\in\R^{d}\times\R$, where $Z_S(x)$ and $Z_T(t)$ are independent mean zero Gaussian random fields with covariance functions 
$C_S$ and $C_T$, respectively. 
In both cases a separable power exponential model can be obtained by letting 
$r_S(x) = \exp(-\|x\|^{\delta_S})$ and $r_T(t) = \exp(-|t|^{\delta_T})$ for $\delta_S,\delta_T\in[0,2]$. 

\end{document}